\theoremstyle{plain}
\newtheorem{thm}{Theorem}
\newtheorem{prop}[thm]{Proposition}
\newtheorem{lem}[thm]{Lemma}
\newtheorem{cor}[thm]{Corollary}
\newtheorem{ass}[thm]{Assumption}
\newtheorem{exm}[thm]{Example}
\newtheorem{claim}{Claim}
\newtheorem*{notn}{Notation}
\theoremstyle{definition}
\newtheorem{rmk}[thm]{Remark}
\newtheorem*{rmk*}{Remark}
\newcommand{\tagR}{\stepcounter{equation}\tag{R\theequation}}
\newcommand{\eps}{\varepsilon}
\newcommand{\wbar}{\widebar}
\newcommand{\MM}[4]{ \begin{pmatrix} #1 & #2 \\ #3 & #4 \end{pmatrix}}
\newcommand{\M}[4]{ \left( \begin{smallmatrix} #1&#2\\ #3&#4 \end{smallmatrix} \right)}
\newcommand{\kk}{k}
\newcommand{\GF}{\mathbb{F}}
\newcommand{\mf}{\mathfrak}
\newcommand{\cat}{\hat{\mathcal{C}}}
\newcommand{\artcat}{\mathcal{C}}
\newcommand{\rhobar}{\wbar{\rho}}
\newcommand{\maxId}[1]{\mf{m}_{#1}}
\newcommand{\ifThen}[2]{\text{If }&&  &#1  &&\text{ then } &&#2 \tagR}
\DeclareMathOperator{\im}{im}
\DeclareMathOperator{\Hom}{Hom}
\DeclareMathOperator{\CHom}{CHom}
\DeclareMathOperator{\End}{End}
\DeclareMathOperator{\Ad}{Ad}
\DeclareMathOperator{\Char}{char}
\DeclareMathOperator{\GL}{GL}
\DeclareMathOperator{\SL}{SL}
\DeclareMathOperator{\PSL}{PSL}
\DeclareMathOperator{\Witt}{W}
\DeclareMathOperator{\Ob}{Ob}
\DeclareMathOperator{\Def}{Def}
\DeclareMathOperator{\Sets}{Sets}
\DeclareMathOperator{\tr}{tr}
\author{Krzysztof Dorobisz}
\title[The inverse problem for univ. def. rings and the special linear group]{The inverse problem for universal deformation rings and the special linear group}
\address{Mathematisch Instituut, Universiteit Leiden, P.O. Box 9512, 2300~RA Leiden, The Netherlands.}
\email{dorobiszkj@math.leidenuniv.nl}
\begin{document}

\begin{abstract}
We present a solution to the inverse problem for universal deformation rings of group representations. Namely, we show that every complete noetherian local commutative ring $R$ with a finite residue field $\kk$ can be realized as the universal deformation ring of a continuous linear representation of a profinite group. More specifically, $R$ is the universal deformation ring of the natural representation of $\SL_n(R)$ in $\SL_n(\kk)$, provided that $n\geq 4$. We also check for which $R$ an analogous result is true in case $n=2$ and $n=3$. 
\end{abstract}

\maketitle

\section{Introduction}

The inverse problem for universal deformation rings of group representations is the following question: for which complete noetherian local commutative rings $R$ with finite residue field $\kk$ is there a profinite group $G$ and a continuous group representation $\wbar{\rho}: G \rightarrow \GL_n(\kk)$ such that $R$ is the universal deformation ring of the resulting deformation functor? 
So far several questions on the structure of the universal deformation rings, including the one of Matthias Flach, who wondered whether the deformation rings need to be complete intersections, have been answered negatively (\cite{BleherChinburg1,BleherChinburg2,Byszewski,BleherChinburgDeSmit,Rainone}). Moreover, no example of a ring $R$ that satisfies the assumptions of the problem statement and is not a universal deformation ring has been found. In this paper we prove that in fact every $R$ can be realized this way. 

The example we use for this goal is the special linear group $G := \SL_n(R)$ together with the natural representation (induced by the reduction $R\twoheadrightarrow \kk$) in $\GL_n(\kk)$, with the assumption $n \geq 4$. This is the main result of the paper. We moreover discuss similar representations for $n=2,3$ and the results of our considerations may be summarized as follows:

\begin{thm}\label{IntrThm}
Let $R$ be a complete noetherian local ring with a finite residue field $\kk$, $n\geq 2$ and consider the natural representation $\wbar{\rho}$ of $\SL_n(R)$ in $\GL_n(\kk)$. Then $R$ is the universal deformation ring of $\wbar{\rho}$ if and only if $(n, \kk) \not \in \{  (2, \GF_2) , (2, \GF_3) , (2, \GF_5),  (3,\GF_2)  \}.$
\end{thm}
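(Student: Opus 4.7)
The canonical inclusion $\iota_R: \SL_n(R)\hookrightarrow \GL_n(R)$ is itself a lift of $\bar\rho$, and once $R_{\bar\rho}$ is known to exist it induces a canonical ring map $\phi_R: R_{\bar\rho}\to R$ in $\artcat$. The ``if'' direction of the theorem reduces to proving $\phi_R$ is an isomorphism whenever $(n,\kk)$ avoids the four listed pairs; the ``only if'' direction requires, for each exceptional pair, an explicit $R$ (typically $\kk$ or $\kk[\eps]/(\eps^2)$) witnessing the failure, via a deformation not arising from the natural one.

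\textbf{Strategy in the non-exceptional cases.} I would attack the ``if'' direction in three steps: existence of $R_{\bar\rho}$ via absolute irreducibility of $\bar\rho$, which reduces to the classical irreducibility of the natural representation of $\SL_n(\kk)$ on $\kk^n$; surjectivity of $\phi_R$ by showing (by the complete Nakayama lemma) that every first-order deformation of $\bar\rho$ is strictly equivalent to one pulled back from $R$ along a $\kk$-algebra map to $\kk[\eps]/(\eps^2)$; and injectivity of $\phi_R$ by showing that every deformation to every $A\in\artcat$ arises, up to conjugation, as $\GL_n(f)\circ \iota_R$ for some ring homomorphism $f: R\to A$. The mechanism driving both surjectivity and injectivity is the ring-theoretic rigidity of the elementary-matrix presentation of $\SL_n(R)$: the identities $e_{ij}(r)e_{ij}(s)=e_{ij}(r+s)$ together with the Chevalley commutator relations $[e_{ij}(r),e_{jk}(s)]=e_{ik}(rs)$ for pairwise distinct $i,j,k$ convert the construction of $f$ into an additivity-and-multiplicativity check on the map $r\mapsto \rho(e_{ij}(r))$ after a suitable conjugating normalisation on the diagonal torus.

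\textbf{Main obstacle and exceptional cases.} The commutator trick above requires three distinct indices and therefore applies cleanly only for $n\geq 3$, handling $n\geq 4$ uniformly and most of $n=3$. The genuinely delicate case is $n=2$, where multiplicativity must instead be extracted by conjugating $e_{12}(r)$ by a diagonal element $\operatorname{diag}(u,u^{-1})$ with $u\in\kk^\times$. That argument quantifies over units $u$ with $u^2-1\in\kk^\times$ and breaks exactly when $\kk\in\{\GF_2,\GF_3,\GF_5\}$, accounting for three of the four exceptional pairs; the fourth, $(3,\GF_2)$, fails because the exceptional isomorphism $\SL_3(\GF_2)\cong \PSL_2(\GF_7)$ enlarges the symmetries of $\bar\rho$ enough to permit deformations beyond those pulled back from $R$. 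The main obstacle is therefore twofold: the careful $n=2$ analysis (turning the torus-conjugation computation into a genuine ring-homomorphism reconstruction), and the production of explicit counterexamples in each excluded pair --- for instance first-order deformations obtained by twisting along a nontrivial abelian quotient or by exploiting an outer symmetry of $\bar\rho$ --- which must then be checked to lie outside the image of $\phi_R$.
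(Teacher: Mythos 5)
Your high-level outline matches the paper's: reduce to checking that the inclusion $\iota:\SL_n(R)\hookrightarrow\GL_n(R)$ is a universal lift, and reconstruct a ring homomorphism $f:R\to S$ from a deformation using the additive relation $t_{ab}^rt_{ab}^s=t_{ab}^{r+s}$ and the Chevalley relation $[t_{ab}^r,t_{bc}^s]=t_{ac}^{rs}$. But you have jumped past the central technical difficulty. The relations only yield a ring map once a representative $\rho$ of the deformation has been normalized into the special form $\rho(t_{ab}^r)=t_{ab}^{c_{ab}^r}$ for \emph{all} pairs $(a,b)$ and all $r\in R$; ``a suitable conjugating normalisation on the diagonal torus'' is nowhere near enough to produce this. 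The paper's Theorem~\ref{MainThm} spends its entire proof on this step: it runs an induction on the length of an artinian target $S$, writes $\rho(t_{ab}^r)=t_{ab}^{p_r}+M_{ab}^r$ with $M_{ab}^r$ lying in the last graded piece, and proves a chain of claims (vanishing of off-band entries of $M_{ab}^r$, trace zero, vanishing of the $(c,c)$-entries, a symmetry forcing $M_{ab}^r=p_r(e_{ab}X-Xe_{ab})+d_{ab}^re_{ab}$ for a single matrix $X$) before conjugating by $I+X$ to reach the special form. It is precisely one of these claims that demands a fourth index and so forces $n\geq4$ in the clean case --- not, as you assert, that the commutator trick itself needs only three indices and ``handles most of $n=3$.'' Your sketch treats the special form as given and therefore omits the bulk of the argument.

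Your account of $n=2$ also contains a concrete error. You claim the torus-conjugation argument requires a unit $u$ with $u^2-1\in\kk^\times$ and that this fails exactly for $\kk\in\{\GF_2,\GF_3,\GF_5\}$; but $u=2$ in $\GF_5$ satisfies $u^2-1=3\in\GF_5^\times$, so your condition does not in fact exclude $\GF_5$. The correct requirement, as in the paper's Theorem~\ref{MainThmDim2}, is a Teichm\"uller unit $\alpha$ with $\alpha^4\neq1$: conjugation by $\delta=d(\alpha,\alpha^{-1})$ is used both to force $\rho(t^r)$ upper-triangular (this is where $(\alpha^4-1)bc=0$ appears, needing $\alpha^4-1$ invertible) and then to write $t^r$ as a commutator $[\delta,t^s]$ to pin down the diagonal. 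Your description of the exceptional cases is likewise too vague to be correct: the paper's counterexamples are not ``first-order deformations obtained by twisting along a nontrivial abelian quotient'' --- indeed $\SL_n(R)$ is perfect in the relevant cases by Lemma~\ref{LemmaCommutator} --- but rather genuine characteristic-zero lifts of $\bar\rho$ (to $\mathbb{Z}_2$, $\mathbb{Z}_3$, $\mathbb{Z}_5[\sqrt5]$, and, via $\SL_3(\GF_2)\cong\PSL_2(\GF_7)$, to $\mathbb{Z}_2$), which factor through $\SL_n(\kk)$ and hence cannot arise from a $\cat$-morphism out of a general $R$.
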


We conclude the paper by discussing deformations of the natural representations of the closed subgroups of $\GL_n(R)$ containing $\SL_n(R)$. In particular, in the case of the general linear group we obtain $R$ as the universal deformation ring only when $R=\kk$ and $(n, \kk) \not \in \{  (2, \GF_2) , (2, \GF_3) , (3,\GF_2)  \}$; in some cases a noetherian universal deformation ring even does not exist. On the other hand, considering the group $\{ A \in \GL_n(R) \ | \ (\det A)^{\#\kk-1}=1\}$ we realize $R$ as the universal deformation ring if and only if  $(n, \kk) \not \in \{  (2, \GF_2) , (2, \GF_3),  (3,\GF_2)  \}.$		

\begin{rmk*}
The author has announced the main result of the paper during the Maurice Auslander Distinguished Lectures and International Conference in Woods Hole in April 2013. Similar results have been obtained independently by T. Eardley and J. Manoharmayum in their preprint \cite{EardleyManoharmayum}, published at almost the same time as the first ArXiv preprint version of this manuscript (August 2013). However, the methods of both papers are different. The reader is encouraged to get familiar also with the approach of Eardley and Manoharmayum, which is based on cohomology computations. We present a more elementary and self-contained approach treating also some cases ($n=2$; $n=4$, $\kk = \GF_2$) that \cite{EardleyManoharmayum} does not cover. 
\end{rmk*}

\section{Notation and conventions}

In this paper $\kk$ will denote a finite field, $p$ its characteristic and $\Witt(\kk)$ the ring of Witt vectors over~$\kk$. All commutative rings are assumed to have a unit.

\subsection{Categories $\cat$ and $\artcat$} Fixing $\kk$ we consider the category $\cat$ of all complete noetherian local commutative rings with residue field $\kk$. Morphisms of $\cat$ are the local ring homomorphisms inducing the identity on $\kk$. Note that all objects of $\cat$ have a natural structure of a $\Witt(\kk)$-algebra and morphisms of $\cat$ coincide with local $\Witt(\kk)$-algebra homomorphisms. 

If $R \in \Ob(\cat)$ then $\mf{m}_R$ denotes its maximal ideal and $\mu_R = \{ x\in R \ | \ x^{\# \kk-1}=1\}$ the image of the Teichm\"uller lift of $\kk^\times$. By $R_{1}^\times$ we denote the multiplicative subgroup $1+\mf{m}_R$ of $R^\times$ and by $h_R : \cat \rightarrow \Sets$ the functor $\Hom_{\cat}(R,-)$.

By $\artcat$ we denote the full subcategory of artinian rings in $\cat$. Note that every $R \in \Ob(\cat)$ is an inverse limit of $R/\mf{m}_R^l \in \Ob(\artcat)$, $l \in \mathbb{N}$.  

\subsection{Matrices} For $n \in \mathbb{N}$ and an abelian group $A$ the additive group of $n\times n$ matrices over $A$ will be denoted by $M_n(A)$. We adopt the convention of writing $M(i,j)$ for the $(i,j)$-entry of $M \in M_n(A)$. We reserve the symbol $I$ for the identity matrix.

\subsection{Profinite groups} Given a topological ring $R$ and a profinite group $G$ we denote by $R[[G]]$ the inverse limit of group rings $R[G/N]$, $N$ ranging over all open normal subgroups of $G$. Furthermore, $G^p$ and $G^{ab,p}$ stand for the pro-$p$ (abelianized pro-$p$) completion of $G$, i.e.,~$\varprojlim_N G/N$, the limit taken over all open normal subgroups $N$ with $G/N$ a $p$-group (an abelian $p$-group). If $G$ and $H$ are topological groups then $\CHom(G,H) := \{ f \in \Hom(G,H) \ | \ f \hbox{ continuous} \}$. 

\subsection{Group representations}
Given a group representation $\rho: G \rightarrow \GL_n(R)$ we denote by $V_\rho$ the corresponding module over the group ring $RG$, free of rank $n$ over $R$. That is, $V_\rho$ is the module of $n\times 1$ column vectors over $R$, on which $g \in G$ acts via multiplication by $\rho(g)$. 

By $\Ad(\rho)$ we mean the free $R$-module $M_n(R)$ on which $g \in G$ acts via conjugation with $\rho(g)$. Thus $\Ad(\rho) \cong \End_{\kk}(V_{\rho})$. We will denote by $\Ad(\rho)^G$ the submodule of $G$-invariants.

\section{Deformations of group representations}

This quick review of the deformation theory of group representations is based on Mazur's paper \cite{Mazur}. We also recommend a very good and detailed exposition of the topic by Gouvea, \cite{Gouvea}. A reader familiar with the deformation theory should only check our definition of the natural transformation $\rho_{*}$ in section~\ref{IntroDefo}, not used elsewhere in the literature, as well as the lemmas in section~\ref{IntroTechLemmas}.

For the whole of this section $G$ will be a profinite group and $\rhobar : G \rightarrow \GL_n(\kk)$ a continuous representation.

\subsection{Deformations and deformation rings} \label{IntroDefo}

Let $R \in \Ob(\cat)$ and $\pi_R : \GL_n(R) \rightarrow \GL_n(\kk)$ be induced by the reduction $R \rightarrow \kk$. By a \textbf{lift} of $\wbar{\rho}$ to $R$ we mean any $\rho \in \CHom(G, \GL_n(R))$ such that $\wbar{\rho} = \pi_R \circ \rho$. Two lifts to $R$, $\rho$ and $\rho'$, are called ``\textbf{strictly equivalent}'' if and only if $\rho' = K \rho K^{-1}$ for some $K \in \ker \pi_R$. We denote the strict equivalence class of $\rho$ by $[\rho]$ and the resulting set of all equivalence classes, called \textbf{deformations}, by $\Def_{\wbar{\rho}}(R)$. 

Note that if $\rho$ is a lift of $\rhobar$ to $R \in \Ob(\cat)$ then $V_{\rho}$ is such an $RG$-module that $\kk \otimes_R V_{\rho} \cong V_{\rhobar}$ as $\kk G$-modules. The $RG$-modules corresponding to strictly equivalent lifts are isomorphic.

Every $\cat$-morphism $f:R\to R'$ induces a group homomorphism $\GL_n(f) : \GL_n(R) \to \GL_n(R')$  (defined by applying $f$ to each of the entries of a given matrix) and a map $\Def_{\wbar{\rho}}(f) : \Def_{\wbar{\rho}}(R) \rightarrow \Def_{\wbar{\rho}}(R')$, $[\rho] \mapsto [\GL_n(f)\circ\rho]$. This way the \textbf{deformation functor} $\Def_{\wbar{\rho}}: \hat{\mathcal{C}} \to \Sets$ is obtained. We are interested in the question of its representability, i.e.,~in the existence of $R \in \Ob(\cat)$, such that $\Def_{\rhobar}$ and $h_R$ are naturally isomorphic. If this is the case, $R$ is called the \textbf{universal deformation ring} of $\wbar{\rho}$. Note that such a ring is unique up to an isomorphism by the Yoneda lemma.

Every lift $\rho : G \rightarrow \GL_n(R)$ of $\rhobar$ induces a \textbf{natural transformation} $\rho_{*} : h_R \rightarrow \Def_{\rhobar}$ defined as $f \mapsto [\GL_n(f) \circ \rho]$. If this transformation is an isomorphism we say that $\rho_u$ is a \textbf{universal lift} (in this case $R$ clearly is a universal deformation ring of $\rhobar$). We will write $\rho_{*}(S)$ for the map $h_R(S) \rightarrow \Def_{\rhobar}(S)$ given by $\rho_{*}$. 

In some situations a notion weaker than representability is useful. We say that a functor $D : \cat \rightarrow \Sets$ is nearly representable if there exists $R \in \Ob(\cat)$ and a natural transformation $h_{R} \rightarrow D$ that is \emph{smooth} and bijective for $\kk[\eps] := \kk[X]/(X^2)$. For the meaning of term ``smooth'' we refer the reader to Mazur (\cite[\S 18]{Mazur}); we shall only use the fact that it implies that $h_{R}(S) \rightarrow D(S)$ is surjective for every $S \in \Ob(\cat)$. If $D = \Def_{\rhobar}$ is nearly representable then a ring $R$ as above will be called the versal deformation ring of $\rhobar$. If this is the case then such a ring is unique up to an isomorphism and if the functor is representable then the versal and universal deformation rings are isomorphic. %Both versal and universal deformation rings (if they exist) are unique up to an isomorphism, which in case of a universal deformation ring is moreover unique. Interestingly, a versal deformation ring need not have this additional property.

\subsection{Existence theorems}\label{IntroExistCrit} Following Mazur we introduce the so-called ``$p$-finiteness condition'':
\[(\Phi_p) \quad \hbox{ For every open subgroup } J \leq G \hbox{ the set }\CHom (J, \ \mathbb{Z}/p\mathbb{Z}) \hbox{ is finite}.\]
See \cite[Lemma 2.1]{Gouvea} for equivalent formulations. Assuming that $G$ satisfies $\Phi_p$ the following results hold (\cite[\S 20]{Mazur}, \cite{deSmitLenstra}):
\begin{enumerate}
\item The versal deformation ring of $\rhobar$ exists.
\item If $\Ad(\rhobar)^G = \kk I_n$ then the universal deformation ring of $\rhobar$ exists. 
\end{enumerate}
We note that the second result holds (with some modifications) also without assuming $\Phi_p$. Namely, in general we obtain representability in a larger category, by a local $\Witt(\kk)$-algebra which, however, might not be noetherian. Assuming condition $\Phi_p$ is sufficient to guarantee that this algebra is noetherian and complete. See~\cite{deSmitLenstra} or \cite[\S 2.3]{Hida} for details.

\subsection{Tangent space} 

For $R\in \Ob(\cat)$ we define the tangent space $t_R$ to be $h_R(\kk[\eps])$, where $\kk[\eps] = \kk[X]/(X^2)$. Similarily, for a deformation functor $D = \Def_{\wbar{\rho}}$ the tangent space $t_D$ to $D$ is defined as $D(\kk[\eps])$. Both types of tangent spaces have natural structures of $\kk$-vector spaces (\cite[\S 15]{Mazur}). Moreover, the tangent space to $D$ can be naturally interpreted as the continuous cochain cohomology group $H^1(G, \Ad(\rhobar))$, see \cite[\S 21]{Mazur}.

\begin{prop}  \ \label{PropPresentation}
\begin{enumerate}[(i)]
\item Every $R \in \Ob(\cat)$ is a quotient of $\Witt(\kk)[[X_1, \ldots, X_d]]$ for some $d$. 
\item The minimal number $d$ with the property of $(i)$ is equal to $\dim_\kk t_R$.
\item Let $D$ be a deformation functor with a versal deformation ring $R_v$. Then $R_v$ is a quotient of $\Witt(\kk)[[X_1, \ldots X_d]]$, where $d := dim_\kk (t_D)$.
\end{enumerate}
\end{prop}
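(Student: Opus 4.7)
My plan is to compute $\dim_\kk t_R$ explicitly and then produce a surjection from a power series ring over $\Witt(\kk)$ with exactly that many variables. The key tool is the identification
\[t_R \;\cong\; \Hom_\kk\bigl(\mf{m}_R/(\mf{m}_R^2 + pR),\ \kk\bigr),\]
which I obtain as follows. A $\cat$-morphism $\phi : R \to \kk[\eps]$ is determined by its restriction $\mf{m}_R \to (\eps) \cong \kk$, which vanishes on $\mf{m}_R^2$ since $(\eps)^2 = 0$ and on the image of $p$ since $\kk[\eps]$ is a $\kk$-algebra, so $p \cdot 1_{\kk[\eps]} = 0$. Conversely, any $\kk$-linear map on the stated quotient extends: $R/(\mf{m}_R^2 + pR)$ is a $\kk$-algebra (because $p \cdot 1_R$ lies in the denominator), splitting as $\kk \oplus \mf{m}_R/(\mf{m}_R^2 + pR)$ with zero multiplication on the second summand, and any $\kk$-linear map on that summand extends linearly to a $\cat$-morphism into $\kk[\eps]$. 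In particular, $d := \dim_\kk t_R$ equals the minimal number of elements of $\mf{m}_R$ which, together with the image of $p$, generate $\mf{m}_R$ modulo $\mf{m}_R^2$.

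For part (i), I pick $x_1, \ldots, x_d \in \mf{m}_R$ lifting a $\kk$-basis of $\mf{m}_R/(\mf{m}_R^2 + pR)$. Completeness of $R$ and the universal property of formal power series over $\Witt(\kk)$ yield a $\cat$-morphism
\[\varphi : A := \Witt(\kk)[[X_1, \ldots, X_d]] \longrightarrow R, \qquad X_i \longmapsto x_i.\]
By construction, the induced map $\mf{m}_A/\mf{m}_A^2 \to \mf{m}_R/\mf{m}_R^2$ is surjective (its image contains the classes of $p, x_1, \ldots, x_d$, which span the target). The standard Nakayama argument then shows that $A/\mf{m}_A^n \to R/\mf{m}_R^n$ is surjective for every $n$ by induction on $n$, using that $\mf{m}_R^{n-1}/\mf{m}_R^n$ is generated over $\kk$ by products of $n-1$ generators of $\mf{m}_R$. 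Passing to the inverse limit using completeness gives surjectivity of $\varphi$.

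For part (ii), any $\cat$-surjection $\Witt(\kk)[[X_1, \ldots, X_d]] \twoheadrightarrow R$ induces an injection $t_R \hookrightarrow t_A$ by precomposition, and the tangent-space identification applied to $A$ gives $\dim_\kk t_A = d$. Hence $\dim_\kk t_R \leq d$, with equality achievable by the construction in (i). Claim (iii) follows by applying (i) to $R_v$ and observing that the versal transformation $h_{R_v} \to D$, being bijective at $\kk[\eps]$, gives $\dim_\kk t_{R_v} = \dim_\kk t_D$.

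The main delicate point is the tangent-space identification itself: one must carefully account for the rigidity of the $\Witt(\kk)$-algebra structure so that the summand $pR$ appears in the denominator. Without it, the dimension count in (ii) would be off by one in mixed characteristic, and the minimum $d$ in (ii) would fail to match the value achieved in (i).
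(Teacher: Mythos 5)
Your proof is correct and follows essentially the same route as the paper's sketch: both hinge on the identification $t_R \cong \Hom_\kk(\mf{m}_R/(\mf{m}_R^2 + pR),\kk)$ and the fact that a basis of that quotient lifts to generators of $\mf{m}_R$ together with $p$. The only difference is presentational: where the paper cites Cohen's structure theorem and Mazur's tangent-space correspondence, you unpack both from scratch (constructing the surjection via completeness and Nakayama, and verifying the $\kk$-algebra splitting of $R/(\mf{m}_R^2 + pR)$ by hand).
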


\begin{proof}[Sketch of the proof]
Part (1) follows from Cohen's structure theorem, see \cite[Theorem 29.4,(ii)]{Matsumura}. The minimal number $d$ as in $(i)$ is the minimal number of generators of the maximal ideal of $R/pR$, which equals $\dim_\kk \mf{m} / (\mf{m}^2,p)$. On the other hand, morphisms $R \rightarrow \kk[\eps]$ correspond bijectively with $\kk$-linear maps $\mf{m} / (\mf{m}^2,p) \rightarrow \kk$ (\cite[\S 15]{Mazur}). The last claim follows from the definition of a versal deformation ring, due to which there is a bijection between $t_D$ and $t_{R_v}$. 
\end{proof}

\subsection{Some technical lemmas}\label{IntroTechLemmas}

The following general results will be useful in the later part of the paper. A reader wanting to learn quickly the main results may skip Lemma~\ref{CentrLemma} and Lemma~\ref{Lemma_sl-gl}, since they will be used only in the last two sections.

\begin{lem}\label{ProjLemma}
If $G$ is finite and $\rhobar$ such that the $\kk G$-module $V_{\rhobar}$ is projective then $\Witt(\kk)$ is the universal deformation ring of $\rhobar$. In particular, this applies to every representation of a finite group $G$ of order coprime to $p$.
\end{lem}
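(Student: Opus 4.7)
The plan is to show that $\Def_{\rhobar}(R)$ is a singleton for every $R \in \Ob(\cat)$ and, in parallel, to construct an explicit lift of $\rhobar$ to $\Witt(\kk)$. Since $h_{\Witt(\kk)}(R) = \Hom_{\cat}(\Witt(\kk), R)$ is also a singleton (every object of $\cat$ carries a unique $\Witt(\kk)$-algebra structure), the natural transformation induced by such a lift is automatically a natural isomorphism $h_{\Witt(\kk)} \cong \Def_{\rhobar}$, which exhibits $\Witt(\kk)$ as the universal deformation ring of $\rhobar$.

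The key cohomological input is the identification
\[ H^i(G, \Ad(\rhobar)) \cong \operatorname{Ext}^i_{\kk G}(V_{\rhobar}, V_{\rhobar}), \]
coming from $\Ad(\rhobar) \cong \End_{\kk}(V_{\rhobar})$. If $V_{\rhobar}$ is projective these groups vanish for all $i \geq 1$. Within Mazur's framework this has the standard consequence that for every small extension $R' \twoheadrightarrow R$ in $\artcat$ (i.e.\ with kernel $I$ satisfying $\mf{m}_{R'} I = 0$, so $I$ is a $\kk$-vector space), the obstruction to lifting a class $[\rho] \in \Def_{\rhobar}(R)$ to $\Def_{\rhobar}(R')$ lies in $H^2(G,\Ad(\rhobar)) \otimes_{\kk} I = 0$, and the set of such lifts is a torsor under $H^1(G,\Ad(\rhobar)) \otimes_{\kk} I = 0$. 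Hence $\Def_{\rhobar}(R') \to \Def_{\rhobar}(R)$ is a bijection.

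Starting from the trivial base case $\Def_{\rhobar}(\kk) = \{[\rhobar]\}$ and iterating along a chain of small extensions, one concludes $|\Def_{\rhobar}(R)| = 1$ for every $R \in \Ob(\artcat)$. The same inductive procedure, applied along the tower $\kk \leftarrow \Witt(\kk)/p^2 \leftarrow \Witt(\kk)/p^3 \leftarrow \cdots$, produces a compatible system of lifts $\rho_m : G \to \GL_n(\Witt(\kk)/p^m)$ whose inverse limit $\rho_0 : G \to \GL_n(\Witt(\kk))$ is a continuous lift of $\rhobar$ (continuity being automatic since $G$ is finite). For a general $R \in \Ob(\cat)$ one has $R = \varprojlim_l R/\mf{m}_R^l$ with each $R/\mf{m}_R^l \in \Ob(\artcat)$ and $\GL_n(R) = \varprojlim \GL_n(R/\mf{m}_R^l)$, so a lift to $R$ is the same as a compatible family of lifts to the artinian quotients; uniqueness up to strict equivalence likewise passes to the limit, giving $|\Def_{\rhobar}(R)| = 1$ on all of $\cat$.

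The final assertion is immediate from Maschke's theorem: if $p \nmid \#G$ then $\kk G$ is semisimple, so every $\kk G$-module---in particular $V_{\rhobar}$---is projective. The step most likely to require care is the passage from artinian rings to general $R \in \Ob(\cat)$, where one must verify both that a compatible family of singleton deformation sets really assembles into a single deformation class over $R$ and that any strict equivalence between two such compatible families can itself be assembled in the limit; both points follow from the profinite structure of $\ker \pi_R$ and the compatibility of $\GL_n$ with inverse limits.
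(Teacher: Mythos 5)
Your argument is correct, but it is structured differently from the paper's. The paper first invokes the existence of the versal deformation ring $R_v$, bounds it from above by observing that $H^1(G,\Ad(\rhobar)) = 0$ forces $R_v$ to be a quotient of $\Witt(\kk)$ (via Proposition~\ref{PropPresentation}), deduces universality since deformation sets have at most one element, and then cites Serre (\cite[Prop.~42, \S 14.4]{Serre}) for the existence of a lift to $\Witt(\kk)$, which forces $R_v = \Witt(\kk)$. You instead bypass both the versal ring and Serre's theorem by running the full obstruction-theoretic argument: the vanishing of $H^1$ and $H^2$ of $\Ad(\rhobar)$ makes $\Def_{\rhobar}(R') \to \Def_{\rhobar}(R)$ a bijection along any small extension, so $\Def_{\rhobar}$ is singleton-valued and agrees with $h_{\Witt(\kk)}$, and the same vanishing produces the lift to $\Witt(\kk)$ by climbing the tower $\Witt(\kk)/p^m$. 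The trade-off is that your route needs the obstruction calculus (obstructions in $H^2 \otimes I$, fibres controlled by $H^1 \otimes I$), which is standard in Mazur's framework but is not explicitly set up in this paper; the paper's route needs only the $H^1$ computation plus the existence theorems it already recalls in \S\ref{IntroExistCrit}, outsourcing the constructive lift to a classical reference. Your version is arguably more self-contained and illuminates why Serre's result holds (it is the same $\operatorname{Ext}^2$-vanishing in disguise), but if you intend it to replace the paper's proof you should either cite a source for the obstruction-theoretic facts or add them to \S\ref{IntroTechLemmas}. Your passage from $\artcat$ to $\cat$ via inverse limits and the Maschke observation are both fine and match the paper.
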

\begin{proof}
Let $R_v$ be the versal deformation ring of $\wbar{\rho}$. Since $V_{\rhobar}$ is $\kk G$-projective, $\Ad(\rhobar)$ is $\kk G$-projective as well, hence cohomologically trivial. The tangent space $H^1(G,\Ad(\rhobar))$ to $D_{\rhobar}$ is therefore zero-dimensional and so $R_v$ is a quotient of $\Witt(\kk)$ by Proposition~\ref{PropPresentation}. Hence, there is at most one deformation of $\wbar{\rho}$ to every $S \in \Ob(\cat)$; in particular: $R_v$ is universal. On the other hand, by Prop. 42, \S 14.4 in \cite{Serre} the $\kk G$-module $V_{\rhobar}$ can be lifted to a $\Witt(\kk)G$-module that is free over $\Witt(\kk)$. This implies that $R_v = \Witt(\kk)$. In case $G$ is finite and $p \nmid \# G$, every $\kk G$-module of finite $\kk$-dimension is projective by Maschke's theorem.
\end{proof}

\begin{lem}\label{CentrLemma}
If $\Ad(\rhobar)^G = \kk I$ and $\rho$ is a lift of $\rhobar$ to $R \in \Ob(\cat)$ then $\Ad(\rho)^G = R I$.
\end{lem}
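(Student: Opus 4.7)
The inclusion $RI \subseteq \Ad(\rho)^G$ is immediate since scalar matrices are central in $M_n(R)$, so the real work is in showing $\Ad(\rho)^G \subseteq RI$. My plan is to reduce to $R$ Artinian via the presentation $R = \varprojlim_l R/\mf{m}_R^l$, and then induct on the length of the resulting Artinian ring.

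For the reduction, composing $\rho$ with $\GL_n(R) \to \GL_n(R/\mf{m}_R^l)$ yields a lift $\rho_l$ of $\rhobar$ to $R/\mf{m}_R^l$, and any $M \in \Ad(\rho)^G$ reduces to a $G$-invariant of $\Ad(\rho_l)$. Granting the Artinian case, each such reduction has the form $\lambda_l I$ for a unique $\lambda_l \in R/\mf{m}_R^l$, and since the $\lambda_l$ are compatible under the transition maps they assemble to an element $\lambda \in R$ with $M = \lambda I$.

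For $R$ Artinian I induct on the length. The base case $R = \kk$ is the hypothesis. In the inductive step I choose a nonzero ideal $J \subseteq R$ with $\mf{m}_R J = 0$ (for example a minimal ideal) and let $\rho'$ be the induced lift to $R' := R/J$. Since $R'$ has strictly smaller length, by induction $\Ad(\rho')^G = R' I$. Given $M \in \Ad(\rho)^G$, its image in $\Ad(\rho')^G$ equals $\lambda' I$ for some $\lambda' \in R'$; picking any lift $\lambda \in R$ of $\lambda'$, the element $N := M - \lambda I$ lies in $M_n(J) \cap \Ad(\rho)^G$, so the task reduces to showing $M_n(J)^G = J I$.

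The only technical point, and what I expect to be the main (but modest) obstacle, is the description of the $G$-action on $M_n(J)$. Because $\mf{m}_R J = 0$, each entry of $\rho(g)$ acts on $J$ through its residue in $\kk$, so for any $X \in M_n(J)$ one has $\rho(g) X \rho(g)^{-1} = \rhobar(g) X \rhobar(g)^{-1}$, the right-hand side being the natural action of $M_n(\kk)$ on $M_n(J) = M_n(\kk) \otimes_\kk J$. This identifies $M_n(J)$ with $\Ad(\rhobar) \otimes_\kk J$ as a $\kk G$-module (with trivial action on $J$), whose invariants are $\Ad(\rhobar)^G \otimes_\kk J = \kk I \otimes_\kk J = J I$ by hypothesis. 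Hence $N = \mu I$ for some $\mu \in J$ and $M = (\lambda + \mu) I \in R I$, closing the induction.
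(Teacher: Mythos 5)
Your proposal is correct and follows essentially the same strategy as the paper's own proof: peel off the filtration by powers of the maximal ideal and use the observation that $\mf{m}_R J = 0$ forces conjugation by $\rho(g)$ on $M_n(J)$ to agree with conjugation by $\rhobar(g)$, so that $M_n(J) \cong \Ad(\rhobar)\otimes_\kk J$ and its invariants are $JI$. The only cosmetic difference is in bookkeeping: the paper inducts directly on $l$ to show $X$ is scalar modulo $\mf{m}_R^l$ for every $l$ (and then invokes completeness), whereas you first pass to Artinian quotients and induct on length using a minimal ideal $J$ rather than specifically $\mf{m}_R^{l-1}/\mf{m}_R^l$; these are two presentations of the same argument.
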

\begin{proof}[Sketch of the proof, cf. {\cite[Lemma 3.8]{Gouvea}}] 
Let $X \in \Ad(\rho)^G$. It suffices to show inductively that for every $l\geq 1$ matrix $X$ is scalar modulo $\mf{m}_R^l$. For $l=1$ this is assumed in the lemma statement. In the inductive step we have $X = \lambda I + Y$, where $\lambda \in R$ and $Y \in M_n(\mf{m}_R^{l-1})$. Let $W$ be the $\kk$-vector space $\mf{m}_R^{l-1}/\mf{m}_R^l$ and $d$ its dimension. Then $M_n(W) \cong \Ad(\rhobar)^{d}$ as $\kk G$-modules and $M_n(W)^G \cong (\Ad(\rhobar)^d)^G = (\kk I)^d$. Clearly $Y \in \Ad(\rho)^G$, so its image modulo $\mf{m}_R^l$ lies in $M_n(W)^G$. It follows that $Y$, hence also $X$, is a scalar modulo $\mf{m}_R^l$.
\end{proof}

\begin{lem}\label{Lemma_sl-gl}
Let $N \lhd G$ be a closed normal subgroup such that $\Ad(\rhobar)^N = \kk I$ and $\rhobar|_N$ has a universal deformation ring $R$. Suppose there exists a universal lift of $\rhobar|_N$ that may be extended to a lift $\varphi : G \rightarrow \GL_n(R)$ of $\rhobar$.
\begin{enumerate}[(i)]
\item For every $S \in \Ob(\cat)$ and $\xi \in \Def_{\rhobar}(S)$ there exist unique $f\in \Hom_{\cat}(R, S)$ and $\lambda \in \CHom(G, S_1^\times)$ with $N \subseteq \ker \lambda$ such that $\lambda \varphi_{*}(f) \in \xi$.
 \item If $\Gamma := (G/N)^{ab,p}$ considered with its natural $\mathbb{Z}_p$-module structure is finitely generated then $R[[ \Gamma ]]$ is a universal deformation ring for $\rhobar$. Otherwise $\Def_{\rhobar}$ is not representable over $\cat$. 
\end{enumerate}
\end{lem}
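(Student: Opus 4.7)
\emph{Plan for part (i).} Given $\xi \in \Def_{\rhobar}(S)$, the plan is to pick any representative $\rho \in \xi$, use universality of $\varphi|_N$ to produce the unique $f \in \Hom_\cat(R,S)$ with $[\rho|_N] = [(\GL_n(f)\circ \varphi)|_N]$, and then extract $\lambda$ from the discrepancy between $\rho$ and $\psi := \GL_n(f) \circ \varphi$ on all of $G$. After replacing $\rho$ by a strictly equivalent lift inside $\xi$, we may assume $\rho|_N = \psi|_N$; then $M(g) := \rho(g)\psi(g)^{-1}$ takes values in $I + M_n(\mf{m}_S)$ and is trivial on $N$. Evaluating $M(ng)$ in two ways — directly, and via $ng = g(g^{-1}ng)$ using normality of $N$ — yields $\psi(n) M(g) \psi(n)^{-1} = M(g)$ for all $n \in N$, so $M(g) \in \Ad(\psi|_N)^N$. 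Lemma~\ref{CentrLemma} identifies this centraliser with $S I$, so $M(g) = \lambda(g) I$ for a unique $\lambda(g) \in S_1^\times$, and routine checks confirm that $\lambda : G \to S_1^\times$ is a continuous homomorphism vanishing on $N$ with $\rho = \lambda \psi$.

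\emph{Uniqueness.} Suppose $\lambda\psi$ and $\lambda'\psi'$ are strictly equivalent via some $K \in \ker \pi_S$, where $\psi' := \GL_n(f')\circ \varphi$. Restricting to $N$, the same $K$ realises a strict equivalence between $\psi|_N$ and $\psi'|_N$, so universality of $\varphi|_N$ forces $f = f'$, hence $\psi = \psi'$. Then $K$ centralises $\psi(N)$, and a second application of Lemma~\ref{CentrLemma} (over $S$) shows $K$ is scalar; the conjugation is therefore trivial, giving $\lambda = \lambda'$.

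\emph{Plan for part (ii).} First observe that $S_1^\times = 1 + \mf{m}_S$ is abelian pro-$p$ (since $p \in \mf{m}_S$ forces $(1+x)^{p^k} \to 1$), so $\{\lambda \in \CHom(G, S_1^\times) \mid N \subseteq \ker \lambda\} = \CHom(\Gamma, S_1^\times)$. Combined with (i), this gives a functorial bijection
\[\Def_{\rhobar}(S) \;\longleftrightarrow\; \Hom_\cat(R, S) \times \CHom(\Gamma, S_1^\times).\]
When $\Gamma$ is finitely generated over $\mathbb{Z}_p$, the completed group algebra $R[[\Gamma]]$ is noetherian and lies in $\cat$; its universal property — together with the observation that $\Gamma$, being pro-$p$, must land in $S_1^\times$ under any $\cat$-morphism because $\kk^\times$ has order coprime to $p$ — identifies $\Hom_\cat(R[[\Gamma]], S)$ with the right-hand side above. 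A universal lift is then $g \mapsto \ell(g) \cdot (\GL_n(\iota) \circ \varphi)(g)$, where $\iota : R \to R[[\Gamma]]$ is the structural map and $\ell : G \twoheadrightarrow \Gamma \hookrightarrow R[[\Gamma]]^\times$; naturality of the decomposition from (i) gives that the induced transformation $h_{R[[\Gamma]]} \to \Def_{\rhobar}$ is an isomorphism.

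When $\Gamma$ is not finitely generated, $\Gamma/p\Gamma$ is infinite, and so $\CHom(\Gamma, 1+\kk\eps) \cong \CHom(\Gamma, \kk)$ has infinite $\kk$-dimension; hence $\Def_{\rhobar}(\kk[\eps])$ is an infinite-dimensional $\kk$-vector space. Since every object of $\cat$ has a finite-dimensional tangent space by Proposition~\ref{PropPresentation}(ii), the functor $\Def_{\rhobar}$ cannot be representable in $\cat$. The hardest step will be the matrix bookkeeping in (i): verifying that $M(g)$ genuinely lies in $\Ad(\psi|_N)^N$ so that Lemma~\ref{CentrLemma} delivers scalarity, and reusing the same lemma to tame the ambiguity in the uniqueness argument.
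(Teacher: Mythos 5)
Your proposal is correct and follows essentially the same route as the paper: universality of $\varphi|_N$ produces $f$, normality of $N$ together with Lemma~\ref{CentrLemma} forces $\rho(g)\psi(g)^{-1}$ to be scalar (the paper phrases this as showing $\rho_f(g)^{-1}\rho(g)$ commutes with $\rho_f(N)$, a cosmetic variant of your two-ways-of-evaluating-$M(ng)$ computation), uniqueness is again via Lemma~\ref{CentrLemma}, and part (ii) is the paper's argument stated in contrapositive form (you deduce an infinite-dimensional tangent space directly, the paper deduces finiteness of $\Gamma/p\Gamma$ from finite-dimensionality and applies topological Nakayama). The only places you are a bit more terse than the paper — that $R[[\Gamma]]$ lies in $\cat$ when $\Gamma$ is a finitely generated $\mathbb{Z}_p$-module, and that the map $g\mapsto \ell(g)\cdot(\GL_n(\iota)\circ\varphi)(g)$ really is a universal lift — are also places where the paper only gestures at references, so nothing is missing.
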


\begin{proof}
$(i)$ Let $S \in \Ob(\cat)$ and $\xi \in \Def_{\rhobar}(S)$. Restricting to $N$ and using the definition of a universal lift we obtain $f \in \Hom_{\cat}(R, S)$ and $\rho \in \xi$ such that $\rho|_{N} = \varphi_{*}(f) |_{N}$. For brevity we will write $\rho_f$ for $\varphi_{*}(f)$. 

Let $g \in G$ and $n \in N$. Since $N$ is normal we have $\rho_f(gng^{-1}) = \rho(gng^{-1})=\rho(g)\rho_f (n)\rho(g)^{-1}$. Consequently, $\rho_f (g)^{-1}\rho(g)$ commutes with all $\rho_f(n)$, $n \in N$ and is therefore a scalar matrix by Lemma~\ref{CentrLemma}. Let $\lambda :  G \rightarrow S_1^\times$ be such that $\rho(g) = \lambda(g) \rho_f(g)$. Then $\lambda$ is a continuous group homomorphism factoring via $G/N$. Conversely, given $f \in \Hom_{\cat}(R, S)$ and $\lambda \in \CHom(G, S_1^\times)$ factoring via $G/N$, the map $\varphi_{f,\lambda} : g \mapsto \lambda(g) \rho_f(g)$ is a lift of $\rhobar$. 
Moreover, if $\varphi_{f, \lambda} = X\varphi_{f', \lambda'} X^{-1}$ for $X \in \GL_n(R)$ then restricting to $N$ we see that $X$ is scalar by Lemma~\ref{CentrLemma}, which implies $\varphi_{f, \lambda} = \varphi_{f', \lambda'}$, i.e., $f=f'$, $\lambda = \lambda'$. 

$(ii)$ If $\Gamma$ is a finitely generated $\mathbb{Z}_p$-module then, since $\mathbb{Z}_p$ is a PID, it is a quotient of $\mathbb{Z}_p^r$ for some $r\in \mathbb{N}$. Hence, 
$R[[\Gamma]]$ is a quotient of $R[[ \mathbb{Z}_p^r ]] \cong R[[X_1, \ldots, X_r]]$ and an object of $\cat$ (cf. \cite[Corollary 2.24]{Hida}). Denote the image of $g \in G$ under the map $G \rightarrow \Gamma \rightarrow R[[\Gamma]]$ by $[g]$. The first part of the lemma implies that $G \ni g \mapsto \rho(g) [g] \in \GL_n(R[[\Gamma]])$ is a universal lift of $\rhobar$ (cf. \cite[\S 1.4]{Mazur2}).

On the other hand, if $\Def_{\rhobar}$ is representable over $\cat$ then the tangent space $\Def_{\rhobar}(\kk[\eps])$ is finite dimensional. This and the first part of the lemma imply that $\CHom(\Gamma, \kk[\eps]^{\times}) \cong \CHom(\Gamma/p\Gamma,\kk)$ is a $\kk$-vector space of finite dimension. Consequently, so is $\Gamma/p\Gamma$. Since $\Gamma$ is a pro-$p$ group we have $\bigcap_{l\in\mathbb{N}} p^l\Gamma = \{0\}$ and a topological Nakayama's lemma (\cite[Lemma 3.2.6]{Hida2}) implies that $\Gamma$ is finitely generated over $\mathbb{Z}_p$ .
\end{proof}

\begin{rmk}\label{RmkNoetherianIsBad} The formulation of the lemma reflects the fact that $R[[ \Gamma ]]$ is not necessarily noetherian. Working not in $\cat$, but in a bigger category comprising also non-noetherian projective limits of local artinian rings we could simply state that $\Def_{\rhobar}$ is represented by $R[[\Gamma]]$. See \cite{deSmitLenstra} or \cite[\S 2.3]{Hida} for such a more general approach to deformation theory.
\end{rmk}

\section{Structure of the special linear group}

\noindent In this section $R$ denotes a commutative ring and $n$ an integer. Moreover, we assume $n\geq 2$. 

\begin{notn} \label{notnGL}
Let $a,b \in \{1,\ldots, n\}$, $a\neq b$. We introduce the following notation for some of the elements of $\GL_n(R)$. 
\begin{itemize}
\setlength{\itemsep}{5 pt}
	\item $e_{ab}$ is the matrix having only one non-zero entry, which is 1 at the $(a,b)$-th place.
	\item $t_{ab}^r :=I+re_{ab}$, for $r \in R$. 
	\item $d(r_1, \ldots, r_n)$, where $r_i \in R^\times$, is the diagonal matrix with consecutive diagonal entries $r_1, \ldots, r_n$. 
	\item $d_{ab}^r := d(r_1, \ldots, r_n)$, where $r \in R^{\times}$ and $r_a := r$, $r_b:=r^{-1}$, $r_i :=1$ for $i\neq a,b$.
    \item $\sigma_{ab}^{r} := I - e_{aa}-e_{bb} + re_{ab} -r^{-1} e_{ba}$, for $r \in R^{\times}$.
\end{itemize}
\end{notn}

\noindent This notation suppresses $n$ and $R$, but that should not cause any problems as $n$ will usually be fixed and $R$ easily deducible from the element $r$ used. 

\begin{lem} \label{RelLemma}
The following relations hold for $a,b,c,d \in \{1,\ldots,n\}$, $a\neq b$: 
\begin{align}
   \ifThen  {r,s \in R}                        
			{t_{ab}^r \ t_{ab}^s = t_{ab}^{r+s},}    \label{rel+}
\\ \ifThen  {r,s \in R \text{ and } c\neq a,b} 
			{[t_{ab}^r ,  t_{bc}^s ] = t_{ac}^{rs},} \label{rel[]}
\\ \ifThen  {r,s \in R \text{ and } \{a,c\} \cap \{b,d\} = \emptyset} 
			{[t_{ab}^r ,   t_{cd}^s ] = 1} 	\label{rel-comm}
\medskip			
\\ \ifThen  {r \in R \text { and } D=d(\lambda_1, \ldots, \lambda_n), \lambda_i \in R^\times}  
			{Dt_{ab}^rD^{-1} = t_{ab}^{ \ \frac{\lambda_a}{\lambda _b}r }} \label{rel-conj}
\medskip
\\ \ifThen  {u \in R^\times}					   
				 {\sigma_{ab}^u = t_{ab}^u \ t_{ba}^{-\frac{1}{u}} \ t_{ab}^u,} \label{rel-abba}
\\ \ifThen  {u \in R^\times}
 		    {d^u_{ab} = \sigma_{ab}^{u}\sigma_{ab}^{-1},}  \label{rel-dsigma} 
\\ \ifThen  {u \in R^\times, \ r,s \in R \text{ and } u = 1+rs}	  
			{d_{ab}^{u} = t_{ab}^r \ t_{ba}^{s} \ t_{ab}^{\frac{-r}{u}} \ t_{ba}^{-su}.} \label{rel-abba2} 
\end{align} 
\end{lem}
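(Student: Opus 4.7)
The proof is in each case a direct matrix computation, and all seven relations follow from the single combinatorial identity
\[ e_{ab}\cdot e_{cd} = \delta_{bc}\, e_{ad}, \]
together with the observations that $e_{ab}^2 = 0$ whenever $a\neq b$ (so $t_{ab}^r$ has inverse $t_{ab}^{-r}$) and that for a diagonal $D=d(\lambda_1,\ldots,\lambda_n)$ one has $De_{ab}D^{-1} = (\lambda_a/\lambda_b) e_{ab}$. My plan is simply to verify each relation by expanding $(I+\text{something})(I+\text{something})\cdots$ and using these identities.

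First I would dispose of (\ref{rel+})–(\ref{rel-conj}) since they are immediate. For (\ref{rel+}), $(I+re_{ab})(I+se_{ab}) = I+(r+s)e_{ab}+rs\,e_{ab}^2$, and the last term vanishes. For (\ref{rel[]}), compute $t_{ab}^r t_{bc}^s = I + re_{ab}+se_{bc}+rs\,e_{ac}$ while $t_{bc}^s t_{ab}^r = I+re_{ab}+se_{bc}$ (as $c\neq a$); now multiply on the right by $(t_{ab}^r)^{-1}(t_{bc}^s)^{-1}$, using $e_{ab}e_{ac}=0$ and $e_{bc}e_{ac}=0$ to see that $t_{ac}^{rs}$ commutes with $t_{ab}^{-r}$ and $t_{bc}^{-s}$ up to terms that cancel, yielding $[t_{ab}^r,t_{bc}^s]=t_{ac}^{rs}$. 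The commutation relation (\ref{rel-comm}) is similar but easier: the disjointness of $\{a,c\}$ and $\{b,d\}$ forces $e_{ab}e_{cd}=e_{cd}e_{ab}=0$, so $t_{ab}^r$ and $t_{cd}^s$ already commute as elements. Relation (\ref{rel-conj}) was noted above.

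For the remaining three relations I would work with explicit $2\times 2$ blocks indexed by $\{a,b\}$, since $\sigma_{ab}^u$ and $d_{ab}^u$ act as the identity outside rows/columns $a,b$, and all $t_{ab}^\ast$, $t_{ba}^\ast$ appearing in the right-hand sides do too. Thus (\ref{rel-abba}), (\ref{rel-dsigma}), and (\ref{rel-abba2}) reduce to computations in $\SL_2(R)$, where
\[ \sigma_{ab}^u \leftrightarrow \begin{pmatrix} 0 & u \\ -u^{-1} & 0 \end{pmatrix}, \qquad d_{ab}^u \leftrightarrow \begin{pmatrix} u & 0 \\ 0 & u^{-1} \end{pmatrix}, \]
and $t_{ab}^r$, $t_{ba}^s$ correspond to the two kinds of elementary unipotents. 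Relation (\ref{rel-abba}) is the classical Bruhat-type identity $\bigl(\begin{smallmatrix}1&u\\0&1\end{smallmatrix}\bigr)\bigl(\begin{smallmatrix}1&0\\-u^{-1}&1\end{smallmatrix}\bigr)\bigl(\begin{smallmatrix}1&u\\0&1\end{smallmatrix}\bigr) = \bigl(\begin{smallmatrix}0&u\\-u^{-1}&0\end{smallmatrix}\bigr)$, which is a single $2\times 2$ check. Then (\ref{rel-dsigma}) follows by multiplying the resulting matrices for $\sigma_{ab}^u$ and $(\sigma_{ab}^1)^{-1}$. Finally for (\ref{rel-abba2}), substitute the expressions (\ref{rel-abba}) into (\ref{rel-dsigma}) to express $d_{ab}^u$ as a word in $t_{ab}^\ast,t_{ba}^\ast$, and then use (\ref{rel+}) to collapse consecutive powers of the same $t_{ab}$ or $t_{ba}$; the identity $u=1+rs$ is precisely what is needed to match exponents.

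The only mildly delicate step is the last combinatorial simplification in (\ref{rel-abba2}); everything else is a mechanical unfolding of the definitions. None of it uses any hypothesis on $R$ beyond commutativity and, where indicated, invertibility of the scalars denoted $u$ and $\lambda_i$.
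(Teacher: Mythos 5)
Your overall plan --- direct matrix computation driven by the identity $e_{ab}e_{cd}=\delta_{bc}e_{ad}$, reducing the last three relations to $2\times 2$ blocks --- is exactly what the paper's one-line sketch does, and your verifications of (\ref{rel+})--(\ref{rel-dsigma}) are fine. The one misstep is the proposed shortcut for (\ref{rel-abba2}). Substituting (\ref{rel-abba}) into (\ref{rel-dsigma}) and collapsing adjacent factors by (\ref{rel+}) produces the five-letter word $t_{ab}^{u}\,t_{ba}^{-1/u}\,t_{ab}^{u-1}\,t_{ba}^{1}\,t_{ab}^{-1}$, whose exponents are functions of $u$ alone; it cannot produce the four-letter word $t_{ab}^{r}\,t_{ba}^{s}\,t_{ab}^{-r/u}\,t_{ba}^{-su}$, in which $r$ and $s$ are independent parameters constrained only by $u=1+rs$ and in particular need not be units. (That extra freedom is the whole point of stating (\ref{rel-abba2}) separately: it is the relation Lemma~\ref{GenLemma} needs when $\mf{a}\subseteq \mf{m}$, precisely where (\ref{rel-abba}) is unavailable because $-1/u$ is not replaced by an element of $\mf{a}$.) The fix is simply to handle (\ref{rel-abba2}) by the same direct $2\times 2$ check you already used for (\ref{rel-abba}) and (\ref{rel-dsigma}):
\[
\M{1}{r}{0}{1}\M{1}{0}{s}{1}\M{1}{-r/u}{0}{1}\M{1}{0}{-su}{1}
 \;=\; \M{u}{0}{s}{1/u}\M{1}{0}{-su}{1}
 \;=\; \M{u}{0}{0}{1/u},
\]
where the middle step uses $1+rs=u$, hence $1-rs/u=1/u$. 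With that repair, your argument coincides with the paper's.
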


\begin{proof}[Sketch of the proof]
These identities follow directly from definitions and straightforward computations in which one uses the fact that $e_{ab}e_{cd} = \delta_{bc}e_{ad}$ ($\delta_{bc}$ being the Kronecker delta symbol). 
\end{proof}

\begin{lem}\label{GenLemma} Given an ideal $\mf{a} \unlhd R$ let $U_\mf{a} := \SL_n(R) \cap (I + M_n(\mf{a}))$ and $V_\mf{a}:= \langle t_{ab}^r \in \GL_n(R) \ | \ r \in \mf{a} \rangle$. If $R$ is local then $U_{\mf{a}^2}\leq V_\mf{a} \leq U_\mf{a}$. In particular, $U_R = \SL_n(R)$ is generated by all the elements of the form $t_{ab}^r$.
\end{lem}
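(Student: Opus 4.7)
The easy inclusion $V_\mf{a} \leq U_\mf{a}$ is immediate: for $r \in \mf{a}$, $t_{ab}^r = I + r e_{ab}$ has determinant one and differs from $I$ by a matrix in $M_n(\mf{a})$.

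The heart of the lemma is the inclusion $U_{\mf{a}^2} \leq V_\mf{a}$, which I would attack by Gaussian elimination augmented with a subclaim about diagonal matrices. Given $M \in U_{\mf{a}^2}$, the entry $M(1,1)$ lies in $1 + \mf{a}^2 \subseteq 1 + \mf{m}_R$, hence is a unit, so left-multiplying $M$ by $\prod_{i=2}^n t_{i1}^{-M(i,1)/M(1,1)}$ clears the first column below the diagonal, and a symmetric right-multiplication clears the first row to the right of it. Because each scalar $-M(i,1)/M(1,1)$ lies in $\mf{a}^2 \subseteq \mf{a}$ (and similarly for the row step), these transvections sit inside $V_\mf{a}$, and a direct check shows the resulting matrix $M''$ remains in $I + M_n(\mf{a}^2)$ and has block form with $u := M(1,1) \in 1 + \mf{a}^2$ in the top-left $1\times 1$ block and some $N \in \GL_{n-1}(R) \cap (I_{n-1} + M_{n-1}(\mf{a}^2))$, $\det N = u^{-1}$, in the bottom-right $(n-1)\times(n-1)$ block.

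The obstacle is that $N$ does not sit in $\SL_{n-1}(R)$, so naive induction on $n$ fails. I would overcome this by the subclaim that $d_{ab}^u \in V_\mf{a}$ for every $u \in 1 + \mf{a}^2$. By (\ref{rel-abba2}), $d_{ab}^{1+rs} \in V_\mf{a}$ whenever $r,s \in \mf{a}$, since the four transvection exponents $r$, $s$, $-r/(1+rs)$, $-s(1+rs)$ all lie in $\mf{a}$. Writing $u - 1 = \sum_{k=1}^N r_k s_k$ with $r_k, s_k \in \mf{a}$, a short induction on $N$ using the identity
\[ 1 + \sum_{k=1}^N r_k s_k = \Bigl(1 + \sum_{k=1}^{N-1} r_k s_k\Bigr)\bigl(1 + \widetilde{r}_N s_N\bigr), \qquad \widetilde{r}_N := \Bigl(1 + \sum_{k<N} r_k s_k\Bigr)^{-1} r_N \in \mf{a}, \]
factors $u$ as a product of elements of the form $1 + rs$ with $r, s \in \mf{a}$; multiplicativity in the superscript of $d_{ab}$ then yields $d_{ab}^u \in V_\mf{a}$. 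Right-multiplying $M''$ by $d_{12}^{u^{-1}} \in V_\mf{a}$ now absorbs the unit, producing a matrix with $1$ in the top-left and some $N' \in \SL_{n-1}(R) \cap (I_{n-1} + M_{n-1}(\mf{a}^2))$ in the bottom-right. Induction on $n$, with base case $n=2$ in which $N'$ is the $1\times 1$ identity, places this matrix, and hence $M$, in $V_\mf{a}$.

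For the final assertion it suffices to take $\mf{a} = R$, so that $\mf{a}^2 = R$ and both inclusions read $\SL_n(R) = U_R = U_{\mf{a}^2} \leq V_R \leq U_R$.
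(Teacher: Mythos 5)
Your argument for the hard inclusion $U_{\mf{a}^2}\leq V_\mf{a}$ works only when $\mf{a}$ is a \emph{proper} ideal, because the very first step — ``$M(1,1)$ lies in $1+\mf{a}^2\subseteq 1+\mf{m}_R$, hence is a unit'' — silently assumes $\mf{a}^2\subseteq\mf{m}_R$. When $\mf{a}=R$ (so $1+\mf{a}^2=R$), the pivot $M(1,1)$ can perfectly well lie in $\mf{m}_R$: already $\left(\begin{smallmatrix}0&1\\-1&0\end{smallmatrix}\right)\in\SL_2(R)$ defeats your elimination. This is not a side case you can ignore, since the ``in particular'' assertion — the one actually invoked throughout the rest of the paper — is precisely the instance $\mf{a}=R$, and it does not follow from the proper-ideal case (taking $\mf{a}=\mf{m}_R$ only yields $U_{\mf{m}^2}\leq V_{\mf{m}}$, not $\SL_n(R)\leq V_R$). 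The same implicit assumption ``$1+\mf{a}^2\subseteq R^\times$'' is also used in your factorization $u=\prod(1+\widetilde{r}_k s_k)$, where the partial sums $1+\sum_{k<N}r_ks_k$ must be units.

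The paper's sketch resolves this by splitting into two cases. When $\mf{a}=R$ one first observes that, since $\det M\in R^\times$, every row of $M$ contains a unit; a single column operation (adding a column containing a unit to the pivot column) then makes the relevant diagonal entry a unit, after which the elimination proceeds as you describe. Furthermore, the diagonal factors that remain are $d_{ab}^u$ with $u$ an arbitrary unit rather than a $1$-unit, for which one uses relations (\ref{rel-abba}) and (\ref{rel-dsigma}) (giving $d_{ab}^u=\sigma_{ab}^u\sigma_{ab}^{-1}$ with both $\sigma$'s products of transvections) in place of (\ref{rel-abba2}). Apart from this missing case your proposal follows the paper's strategy — Gaussian elimination to a diagonal matrix, then relation (\ref{rel-abba2}) and a factorization of $1+\mf{a}^2$ — and actually fills in a detail the paper leaves implicit, namely how to pass from $1+rs$ with $r,s\in\mf{a}$ to an arbitrary element of $1+\mf{a}^2$.
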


\begin{proof}[Sketch of the proof]
The inclusion $V_\mf{a} \subseteq U_\mf{a}$ is obvious. Conversely, let $M \in U_{\mf{a}^2}$. Observe that multiplying $M$ by $t_{ab}^r$ amounts to adding a multiple of one of its rows or columns to some other.
We claim that performing such operations on $M$ we may obtain a diagonal matrix lying in $U_{\mf{a}^2}$. If $\mf{a}$ is contained in the maximal ideal $\mf{m}$ of the ring $R$ then all the diagonal entries of $M$ are invertible and we may simply cancel all other entries proceeding row by row. In case $\mf{a}=R$ every row contains an invertible element (since $\det M \not\in \mf{m}$), so each diagonal entry either is invertible or becomes such after one of the described operations. We proceed as follows: make $M(n,n)$ invertible, cancel all other entries in the $n$-th row and column, repeat the procedure recursively on the leading $(n-1) \times (n-1)$ submatrix. Every diagonal matrix in $U_{\mf{a}^2}$ may be decomposed as a product of matrices of the form $d_{ab}^r$, $r \in \mf{a}^2$. Relations (\ref{rel-abba}) and (\ref{rel-dsigma}) from Lemma~\ref{RelLemma} in case $\mf{a}=R$ and relation (\ref{rel-abba2}) in case $\mf{a}\subseteq \mf{m}$ imply that each of them is generated by some elements of the form $t_{ab}^r$, $r \in \mf{a}$. Reversing the described process we obtain a way of generating the matrix $M$.
\end{proof}

\begin{lem}\label{LemmaCommutator}
Assume $R$ is local with residue field $k$. If $n\geq 3$ or $n=2$ and $\kk \neq \GF_2, \GF_3$ then the commutator subgroup $\SL_n(R)'$ is equal to $\SL_n(R)$.
\end{lem}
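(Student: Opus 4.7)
The plan is to use Lemma~\ref{GenLemma}, which tells us that $\SL_n(R)$ is generated by the elementary matrices $t_{ab}^r$. Thus it suffices to show that every such $t_{ab}^r$ lies in the commutator subgroup $\SL_n(R)'$, and the argument splits naturally according to whether $n \geq 3$ or $n = 2$.

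For the case $n \geq 3$, the Steinberg-type relation \eqref{rel[]} in Lemma~\ref{RelLemma} does the job immediately: given any $a \neq b$, we may pick $c \in \{1,\ldots,n\} \setminus \{a,b\}$ (here is where $n \geq 3$ is used), and then $t_{ac}^{r} = [t_{ab}^r, t_{bc}^1]$ exhibits every elementary matrix as a commutator.

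For $n = 2$ the previous trick is unavailable, and we will exploit relation \eqref{rel-conj} together with the assumption $\kk \neq \GF_2, \GF_3$. The idea is to conjugate $t_{12}^r$ by a diagonal matrix $D = d(u, u^{-1}) \in \SL_2(R)$: by \eqref{rel-conj} we obtain
\[
[D, t_{12}^r] = D t_{12}^r D^{-1} t_{12}^{-r} = t_{12}^{(u^2 - 1)r}.
\]
If we can choose $u \in R^\times$ so that $u^2 - 1 \in R^\times$, then as $r$ ranges over $R$ so does $(u^2-1)r$, and every $t_{12}^s$ is a commutator; the analogous argument handles $t_{21}^s$. Since $R$ is local, $u^2 - 1$ is a unit precisely when the residue $\bar{u} \in \kk^\times$ satisfies $\bar{u} \neq \pm 1$. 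Such a $\bar u$ exists exactly when $\#\kk \geq 4$, i.e., when $\kk \neq \GF_2, \GF_3$; we then lift it (for instance via the Teichmüller lift $\mu_R$) to the desired $u \in R^\times$.

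The main obstacle is really the $n = 2$ case, where one needs to produce a suitable diagonal matrix, and this is precisely why $\GF_2$ and $\GF_3$ must be excluded. Once the elementary matrices are shown to be commutators, Lemma~\ref{GenLemma} immediately gives $\SL_n(R)' \supseteq \langle t_{ab}^r : a \neq b, r \in R\rangle = \SL_n(R)$, whence equality.
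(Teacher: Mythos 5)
Your proof is correct and follows essentially the same route as the paper's sketch: reduce via Lemma~\ref{GenLemma} to showing the generators $t_{ab}^r$ lie in the commutator subgroup, then use relation~(\ref{rel[]}) when $n\geq 3$ and relation~(\ref{rel-conj}) with a suitable unit $u$ when $n=2$. (A minor notational slip in the $n\geq 3$ case: having fixed $a\neq b$ and chosen $c\notin\{a,b\}$, you exhibit $t_{ac}^r$ rather than $t_{ab}^r$ as a commutator; of course every pair of distinct indices arises this way, so the conclusion is unaffected, but writing $t_{ab}^r=[t_{ac}^r, t_{cb}^1]$ would match the stated quantifier.)
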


\begin{proof}[Sketch of the proof]
It is sufficient to show that generators of $\SL_n(R)$ lie in $\SL_n(R)'$. To this end use Lemma~\ref{GenLemma} and suitable relations from Lemma~\ref{RelLemma}: (\ref{rel[]}) in case $n\geq 3$ or (\ref{rel-conj}) in case $n=2$, $\kk \neq \GF_2, \GF_3$.
\end{proof}

\begin{lem} \label{End1Lem}
If $M\in M_n(R)$ commutes with all $t_{ab}^1 \in \GL_n(R)$ then $M$ is a scalar matrix.
\end{lem}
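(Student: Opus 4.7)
The plan is to reduce the hypothesis to a statement about the matrix units $e_{ab}$ and then read off the constraints on $M$ by a direct entry-by-entry computation.

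First, observe that $M$ commutes with $t_{ab}^1 = I + e_{ab}$ if and only if $M$ commutes with $e_{ab}$. Thus the hypothesis is equivalent to saying $M e_{ab} = e_{ab} M$ for every pair $a \neq b$ in $\{1,\ldots,n\}$. Using the identity $e_{ab} e_{cd} = \delta_{bc} e_{ad}$ (or computing entrywise), we get
\[
 M e_{ab} = \sum_{i} M(i,a)\, e_{ib}, \qquad e_{ab} M = \sum_{j} M(b,j)\, e_{aj}.
\]
Comparing the $(i,j)$-entries of these two matrices yields $M(i,a)\delta_{bj} = \delta_{ia} M(b,j)$ for all $i,j$.

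Now I extract the consequences by plugging in convenient values of $i,j$. Taking $j=b$ and $i \neq a$ gives $M(i,a)=0$; since $n\geq 2$, for each column index $a$ we can choose some $b\neq a$, so every off-diagonal entry in column $a$ vanishes. Symmetrically, taking $i=a$ and $j\neq b$ gives $M(b,j)=0$ for all $j\neq b$, confirming that $M$ is diagonal. Finally, taking $i=a$, $j=b$ gives $M(a,a)=M(b,b)$; as this holds for every pair $a \neq b$, all diagonal entries coincide. Hence $M$ is a scalar multiple of $I$. There is no real obstacle here—this is just bookkeeping of the commutation relations of matrix units, and the only thing to watch is that $n \geq 2$ ensures we always have an auxiliary index available to force each off-diagonal entry to zero.
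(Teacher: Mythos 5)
Your proof is correct and is essentially the paper's own argument: both reduce commuting with $t_{ab}^1$ to commuting with $e_{ab}$ and then read off the entrywise consequences, namely that $M(a,a)=M(b,b)$ and all off-diagonal entries in column $a$ and row $b$ vanish. The only difference is that you spell out the bookkeeping in a bit more detail.
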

\begin{proof}
The claim follows from the observation that $t_{ab}^1M = Mt_{ab}^1$ is equivalent to $e_{ab}M = Me_{ab}$, which holds if and only if $M(a,a)=M(b,b)$ and $\forall \ x \neq a, y \neq b : \ M(x,a) = M(y,b) = 0$. 
\end{proof} 

\section{The special linear group and deformations} 

\noindent Let us fix a finite field $\kk$ and work in the resulting category $\cat$. The following assumption will be made for the whole of this section.

\begin{ass} \label{Ass}
Let $R \in \Ob(\cat)$, $G:= \SL_n(R)$, $n \geq 2$ and $\rhobar : G \rightarrow \GL_n(\kk)$ be the representation induced by the reduction $R \twoheadrightarrow \kk$. Denote by $\iota$ the inclusion $G = \SL_n(R) \hookrightarrow \GL_n(R)$. Let us moreover set $\mf{I} := \{1, \ldots, n\}$ and $\mf{J} := \{ (a,b) \in \mf{I}\times \mf{I} \ | \ a\neq b \}$.
\end{ass}

Note that $G =\varprojlim \SL_n(R/\maxId{R}^k)$ is profinite and $\wbar{\rho}$ continuous. The subgroups $V_{\mf{m}_R^k} \leq G$ defined as in Lemma~\ref{GenLemma} are topologically finitely generated, $\CHom (V_{\maxId{R}^k}, \ \mathbb{Z}/p\mathbb{Z})$ is thus finite. Lemma~\ref{GenLemma} implies that these subgroups form a basis of open neighbourhoods of $G$, so it follows that $G$ satisfies the $p$-finiteness condition $\Phi_p$. Using Lemma~\ref{End1Lem} and the existence criterion of section~\ref{IntroExistCrit} we obtain that $\Def_{\rhobar}$ is representable. 

We are interested in the following question: is $\Def_{\rhobar}$ represented by $R$ ?

\subsection{General observations}

\begin{lem} 
The ring $R$ is the universal deformation ring of $\rhobar$ if and only if $\iota$ is a universal lift. 
\end{lem}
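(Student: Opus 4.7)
The $(\Leftarrow)$ direction is immediate from the definition of a universal deformation ring, so the work lies in $(\Rightarrow)$. Assume $R$ is the universal deformation ring of $\rhobar$; then there exists \emph{some} universal lift $\rho_u : G \to \GL_n(R)$, and the task is to show that $\iota$ is one too. The plan is to compare $\iota$ and $\rho_u$ by invoking the universal property of $\rho_u$ twice: applied to the deformations $[\iota], [\rho_u] \in \Def_{\rhobar}(R)$, it yields unique $\cat$-morphisms $f, g : R \to R$ with $[\iota] = [\GL_n(f) \circ \rho_u]$ and $[\rho_u] = [\GL_n(g) \circ \iota]$.

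A short naturality check, based on the fact that for any $\cat$-morphism $h : R \to S$ the map $\GL_n(h)$ sends $\ker \pi_R$ into $\ker \pi_S$ (because $h$ is local), shows that post-composition with $\GL_n(h)$ descends to strict equivalence classes. Applied to $h = g$ this yields
\[ [\rho_u] = [\GL_n(g) \circ \iota] = [\GL_n(g) \circ \GL_n(f) \circ \rho_u] = \rho_{u,*}(R)(g \circ f), \]
and bijectivity of $\rho_{u,*}(R)$ forces $g \circ f = \mathrm{id}_R$.

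The key step, and the only place where the noetherian hypothesis built into $\cat$ really enters, is to promote this one-sided identity to a two-sided one. From $g \circ f = \mathrm{id}_R$ the endomorphism $g : R \to R$ is surjective, and any surjective endomorphism of a noetherian commutative ring is automatically injective (the ascending chain $\ker g \subseteq \ker g^2 \subseteq \cdots$ must stabilize in the noetherian ring $R$). Hence $g$ is an isomorphism in $\cat$, and so is $f = g^{-1}$. I expect this to be the main (really, the only) non-formal obstacle; everything else is bookkeeping with strict equivalences.

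To conclude, for any $S \in \Ob(\cat)$ and $h \in \Hom_\cat(R, S)$, the same naturality computation gives
\[ \iota_*(S)(h) = [\GL_n(h) \circ \iota] = [\GL_n(h \circ f) \circ \rho_u] = \rho_{u,*}(S)(h \circ f), \]
so $\iota_*(S) = \rho_{u,*}(S) \circ (-\circ f)$. Since $f$ is an isomorphism the map $(-\circ f)$ is a bijection, and $\rho_{u,*}(S)$ is a bijection by universality of $\rho_u$, so $\iota_*(S)$ is a bijection naturally in $S$. Hence $\iota_*$ is a natural isomorphism and $\iota$ is a universal lift.
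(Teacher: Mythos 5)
Your proposal has the right skeleton (get $f$ with $[\iota] = [\GL_n(f)\circ\rho_u]$, show $f$ is an automorphism, then transport universality along $\varphi\mapsto\varphi\circ f$; noetherianity is indeed what upgrades a surjective endomorphism to an automorphism), but the way you try to get surjectivity is circular. You claim that ``invoking the universal property of $\rho_u$'' applied to the element $[\rho_u]$ produces a $g\in\Hom_\cat(R,R)$ with $[\rho_u]=[\GL_n(g)\circ\iota]$. That is not what the universal property of $\rho_u$ gives: applied to $[\rho_u]$ it yields the unique $g$ with $[\rho_u]=[\GL_n(g)\circ\rho_u]$, namely $g=\mathrm{id}_R$, which tells you nothing. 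To get a morphism $g$ satisfying $[\rho_u]=[\GL_n(g)\circ\iota]$ you would need $\iota_*(R)$ to be surjective --- but the surjectivity of $\iota_*$ is precisely the content of the statement you are trying to prove. Indeed, no purely formal argument can work: for an arbitrary deformation $[\iota]\in\Def_{\rhobar}(R)$ the unique $f$ with $\rho_{u,*}(R)(f)=[\iota]$ has no reason to be an automorphism (take $D=h_R$, $\rho_u=\mathrm{id}_R$, and any non-automorphism $[\iota]=f$), so one must use something specific about $\iota$.

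The paper fills exactly this gap by exploiting the concrete form of $\iota$ as the inclusion $\SL_n(R)\hookrightarrow\GL_n(R)$: writing $\GL_n(f)\circ\rho_u = X\iota X^{-1}$ with $X\in I+M_n(\mf{m}_R)$ and evaluating at the elementary unipotent elements $t_{12}^r$ gives $I + rXe_{12}X^{-1}\in\im(\GL_n(f))$ for all $r\in R$; since the $(1,2)$-entry $a$ of $Xe_{12}X^{-1}$ is a unit (it reduces to $1$ mod $\mf{m}_R$), one reads off $R=Ra\subseteq\im(f)$, i.e.\ $f$ is surjective. After that the noetherian argument and the naturality bookkeeping you wrote are exactly what the paper does, so only the step establishing surjectivity needs to be replaced.
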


\begin{proof}
We merely have to prove the ``only if'' part. Assume $R$ is the universal deformation ring of $\rhobar$ and consider a universal lift $\rho_u : G \rightarrow \GL_n(R)$. By definition, there is an $f \in \Hom_{\cat}(R, R)$ such that $ [\GL_n(f) \circ \rho_u] = [\iota]$, i.e., $ \GL_n(f) \circ \rho_u = X \iota X^{-1}$ for some $X \in I + M_n(\mf{m}_R)$. In particular, $\forall r \in R: X t_{12}^{r} X^{-1} = I + r Xe_{12}X^{-1} \in \im (\GL_n(f)).$  If $a \in R^\times$ is the $(1,2)$-entry of $Xe_{12}X^{-1}$ then we see that $R = Ra \subseteq \im (f)$, so $f$ is surjective. Since $R$ is noetherian, $f$ is also an automorphism. As $\iota_*$ is a composition of the transformation $h_R \rightarrow h_R$, $\varphi \mapsto \varphi \circ f$ with $(\rho_u)_*$, it is a natural isomorphism, i.e., $\iota$ is universal.
\end{proof}

Motivated by the above lemma, we turn our attention to the properties of the natural transformation $h_R \rightarrow \Def_{\rhobar}$ induced by $\iota$. 

\begin{lem} \label{AuxLem} Let $S \in \Ob(\cat)$.
\begin{enumerate}[(i)]
\item The map $\iota_{*}(S) : h_R(S) \rightarrow \Def_{\rhobar}(S)$ induced by $\iota$ is injective. 
\item If $\xi \in \Def_{\rhobar}(S)$ then $\xi \in \im \iota_{*}(S)$ if and only if there exists a lift $\rho \in \xi$ satisfying the following condition: 
\[\forall (a,b)\in \mf{J}, r \in R \ \ \exists \ c_{ab}^r \in S: \quad \rho(t_{ab}^r) = t_{ab}^{c_{ab}^r}. \tag{$\diamondsuit$}\label{spec-form}\]
\end{enumerate}
\end{lem}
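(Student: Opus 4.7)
For~(i), I would note that if $f, g \in h_R(S)$ satisfy $\iota_*(f) = \iota_*(g)$, then some $K \in I+M_n(\mf{m}_S)$ conjugates $\GL_n(f)\circ\iota$ to $\GL_n(g)\circ\iota$. Evaluating at $t_{ab}^1$ (on which both sides equal $t_{ab}^1$, since $f(1)=g(1)=1$) shows that $K$ commutes with every $t_{ab}^1$; Lemma~\ref{End1Lem} then forces $K$ to be scalar and hence central, whence $\GL_n(f)\circ\iota = \GL_n(g)\circ\iota$, and $f=g$ follows by comparing $(a,b)$-entries on the image of $t_{ab}^r$. The forward direction of~(ii) is immediate: for $\xi = \iota_*(f)$ one takes $\rho := \GL_n(f)\circ\iota$ and $c_{ab}^r := f(r)$.

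For the converse of~(ii), given $\rho \in \xi$ with witnesses $c_{ab}^r$ as in~(\ref{spec-form}), my plan is to construct a $\cat$-morphism $f : R \to S$ together with a diagonal matrix $D \in I+M_n(\mf{m}_S)$ such that $D\rho(t_{ab}^r)D^{-1} = t_{ab}^{f(r)}$ for every $(a,b) \in \mf{J}$ and $r \in R$; since the $t_{ab}^r$ topologically generate $\SL_n(R)$ by Lemma~\ref{GenLemma}, this will yield $\rho \sim \GL_n(f)\circ\iota$, hence $\xi = \iota_*(f)$. Applying $\rho$ to relation~(\ref{rel+}) at once shows that each map $r \mapsto c_{ab}^r$ is additive, and since $\rho$ lifts $\rhobar$, each $u_{ab} := c_{ab}^1$ lies in $1+\mf{m}_S \subseteq S^\times$. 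The candidate will be $f(r) := c_{12}^r/u_{12}$; it automatically satisfies $f(r) \equiv r \pmod{\mf{m}_S}$, so it will be a $\cat$-morphism as soon as multiplicativity is established.

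For $n \geq 3$ the argument is clean: applying $\rho$ to the commutator identity~(\ref{rel[]}) yields $c_{ac}^{rs} = c_{ab}^r\,c_{bc}^s$ for any three distinct indices $a,b,c$. Setting $r=s=1$ gives $u_{ac} = u_{ab}u_{bc}$, so $c_{ab}^r/u_{ab}$ is independent of $(a,b)$, and the same displayed identity then translates directly into $f(rs) = f(r)f(s)$. For the conjugating matrix, I would take $\lambda_1 := 1$, $\lambda_i := u_{1i}$ for $i \geq 2$, and $D := d(\lambda_1,\ldots,\lambda_n)$; the relations $u_{ac} = u_{ab}u_{bc}$ force $\lambda_a/\lambda_b = 1/u_{ab}$, and~(\ref{rel-conj}) then provides the required identity.

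The case $n=2$ will be the main obstacle, since~(\ref{rel[]}) needs a third index. Here my strategy is to analyse $\Sigma := \rho(\sigma_{12}^1)$ via the two conjugation identities $\sigma_{12}^1\, t_{12}^r\,(\sigma_{12}^1)^{-1} = t_{21}^{-r}$ and $\sigma_{12}^1\, t_{21}^r\,(\sigma_{12}^1)^{-1} = t_{12}^{-r}$; a direct matrix calculation (using that each $u_{ab}$ is a unit) will force $\Sigma$ to be antidiagonal of the form $\M{0}{u_{12}}{-1/u_{12}}{0}$, whence $u_{12}u_{21}=1$ and $c_{21}^r = c_{12}^r/u_{12}^2$. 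Substituting into~(\ref{rel-abba}) and using~(\ref{rel-dsigma}) then lets me compute $\rho(d_{12}^u) = d_{12}^{f(u)}$ for each $u \in R^\times$, and the obvious identity $d_{12}^{u_1u_2} = d_{12}^{u_1}d_{12}^{u_2}$ yields multiplicativity of $f$ on $R^\times$. To extend $f(rs)=f(r)f(s)$ to all $r,s \in R$, I would rewrite products via $(1+r)(1+s) = 1+r+s+rs$ (both factors being units when $r,s \in \mf{m}_R$) and $r(1+s) = r+rs$ (when $r \in R^\times$, $s \in \mf{m}_R$), so that multiplicativity on units together with additivity forces the general case. Finally $D := d(1,u_{12}) \in I+M_2(\mf{m}_S)$ satisfies the required conjugation identity on both $t_{12}^r$ and $t_{21}^r$ by~(\ref{rel-conj}).
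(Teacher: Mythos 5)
Your proposal is correct and takes essentially the same route as the paper: in both, the transvection relations of Lemma~\ref{RelLemma} (in particular (\ref{rel+}), (\ref{rel[]}), (\ref{rel-conj}), (\ref{rel-abba}), (\ref{rel-dsigma})) are used to show that the entries $c_{ab}^r$ arise from a $\cat$-morphism after a diagonal conjugation, with the $n=2$ case handled by studying the image of the antidiagonal element $\sigma_{12}^u$. The only differences are bookkeeping ones — you retain the units $u_{ab}$ and conjugate by $D$ at the end rather than normalizing $c_{1j}^1=1$ up front, and for $n=2$ you derive antidiagonality of $\rho(\sigma_{12}^1)$ from its conjugation action on transvections whereas the paper equates the two factorizations of $\sigma_r$ given by relation~(\ref{rel-abba}).
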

\begin{proof}

$(i)$ Let $S \in \Ob(\cat)$ and consider $f,g \in h_R(S)$ with $\iota_{*}(f)=\iota_{*}(g)$, i.e., $\GL_n(f)|_G = X \GL_n(g)|_G X^{-1}$ for some $X \in I + M_n(\mf{m}_S)$. In particular, $\forall r \in R: \ t_{12}^{f(r)} = Xt_{12}^{g(r)}X^{-1}$, which is equivalent to $f(r) e_{12} = g(r) X e_{12}X^{-1}$. For $r=1$ we have $Xe_{12}X^{-1} = e_{12}$ and so we conclude that $\forall r \in R : \ f(r)e_{12} = g(r)e_{12}$, i.e., $f=g$. 
\smallskip

$(ii)$ Every lift of $\rhobar$ of the form $\GL_n(f)\circ \iota$, $f \in \Hom_{\cat}(R,S)$ obviously satisfies (\ref{spec-form}). Conversely, consider $\rho$ satisfying (\ref{spec-form}) and suppose first that $\mathbf{n \geq 3}$.  Conjugating with the diagonal matrix $d(1,c_{12}^1, \ldots, c_{1n}^1) \in I + M_n(\mf{m}_S)$ we obtain a lift $\tilde{\rho}$ strictly equivalent to $\rho$. It satisfies (\ref{spec-form}) as well and in addition $\forall j \in \mf{I}\setminus\{1\}: \ \tilde{\rho}(t_{1j}^1) = t_{1j}^1$, due to Lemma~\ref{RelLemma}, (\ref{rel-conj}). We may thus suppose without loss of generality that $c_{1j}^1=1$ for all $j \in \mf{I}\setminus\{1\}$. Lemma~\ref{RelLemma}, (\ref{rel[]}) implies then that $\forall j,k \in \mf{J}$, $j,k \neq 1: c_{jk}^1 = c_{1k}^1 / c_{1j}^1 = 1$. Furthermore, for every $j\in\mf{I}\setminus\{1\}$ there exists $k\in\mf{I}\setminus\{1,j\}$, so $c_{j1}^1 = c_{1k}^1/c_{1j}^1 = 1$ as well. We conclude that $\forall (a,b)\in \mf{J}: \ c_{ab}^1=1$.

Due to Lemma~\ref{RelLemma}, (\ref{rel+}) and (\ref{rel[]}), the following relations are satisfied for all $r,s \in R$ and pairwise distinct $a,b,c \in \mf{I}$:
\[
\left\{
\begin{array}{lcl}
   c_{ab}^{r+s} &=& c_{ab}^{r} + c_{ab}^s 
\\ c_{ac}^{rs}  &=& c_{ab}^{r} \ c_{bc}^s 
\end{array} 
\right.
\]
Substituting in the second relation firstly $r=1$, then $s=1$, we see that the value $c_{ab}^{r}$ with a fixed $r \in R$  does not depend neither on $a$, nor on $b$. Denote this common value by $\varphi(r)$. We have obtained a function $\varphi : R \rightarrow S$, which is additive by the first relation and multiplicative by the second one, satisfies $\varphi(1)=1$ and for which $\varphi(r)$ and $r$ have the same image in $\kk$, i.e., $\varphi \in h_R(S)$. Since $G$ is generated by the elements of the form $t_{ab}^r$ (Lemma~\ref{GenLemma}) we conclude that $\rho = \GL_n(\varphi) \circ \iota$ and so $[\rho] \in \im \iota_{*}(S)$. 

\smallskip

Suppose now $\mathbf{n=2}$. We may similarily assume that $c_{12}^1=1$. Define $\varphi,g : R \rightarrow S$ by $\varphi(r) := c_{12}^r$ and $g(r):= c_{21}^r$. We claim that $g = \varphi$ and $\varphi \in \Hom_{\cat} (R,S)$, which clearly implies that $[\rho] = \iota_{*}(\varphi) \in \iota_{*}(h_R(S))$. Since $\varphi$ is additive by the relation (\ref{rel+}) of Lemma~\ref{RelLemma},  $\varphi(1)=1$ and for all $r \in R$ the images of $\varphi(r)$ and $r$ in $\kk$ coincide, we only need to check that $\varphi$ is multiplicative. Furthermore, it is sufficient to check multiplicativity only on $R^\times$, because of additivity of $\varphi$ and the fact that every non-invertible $r\in R$ is a sum of two invertible elements (e.g. $r= (r-1) + 1$). Similarily, it is sufficient to check that $g(r)=\varphi(r)$, for $r\in R^\times$.

Let $r \in R^{\times}$, $a:=\varphi(r)$, $b:=g(-r^{-1})$ and $\sigma_r := \M{0}{r}{-r^{-1}}{0}$. Applying relation (\ref{rel-abba}) of Lemma~\ref{RelLemma} twice, we get $t_{12}^r \ t_{21}^{-1/r} \ t_{12}^r = \sigma_r = t_{21}^{-1/r} t_{12}^r t_{21}^{-1/r}$, hence:
\begin{align*}
 \MM{1}{a}{0}{1}\MM{1}{0}{b}{1}\MM{1}{a}{0}{1} = &\rho( \sigma_r ) = \MM{1}{0}{b}{1} \MM{1}{a}{0}{1}\MM{1}{0}{b}{1} 
\\ \MM{1+ab\ }{2a+a^2b}{b}{1+ab} =  &\rho(\sigma_r  ) = \MM{1+ab}{a}{2b+ab^2}{\ 1+ab} 
\end{align*}
It follows that $a+a^2b=0$, so $ab=-1$, since $a$ is invertible. This means $\varphi(r) g(r^{-1}) = 1$ and $\rho(\sigma_r) = \M{0}{\varphi(r)}{-\varphi(r)^{-1}}{0}$. Relation (\ref{rel-dsigma}) implies now $\rho \left(\M{r}{}{}{r^{-1}}\right)  = \rho  (\sigma_r) \rho (\sigma_1)^{-1} = \M{\varphi(r)}{}{}{\varphi(r)^{-1}}$. As $R^{\times} \ni r \mapsto\M{r}{}{}{r^{-1}}$ is a group homomorphism, we conclude that $\varphi$ is multiplicative on $R^\times$. In particular, $\varphi(r) g(r^{-1}) = 1$ implies that $g(r^{-1}) = \varphi(r^{-1})$, so $g=\varphi$ on $R^\times$. This finishes the proof.
\end{proof}

\subsection{Main result}

\begin{thm}\label{MainThm}
If $n \geq 4$ then $\iota$ is universal.
\end{thm}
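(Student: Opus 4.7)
By the lemma preceding Lemma~\ref{AuxLem}, proving $\iota$ universal amounts to showing $\iota_*(S)$ is bijective for every $S\in\Ob(\cat)$. Injectivity is already given by Lemma~\ref{AuxLem}(i), so the task reduces to surjectivity: given $\xi\in\Def_{\rhobar}(S)$, I need to exhibit a representative $\rho\in\xi$ satisfying condition~(\ref{spec-form}), whereupon Lemma~\ref{AuxLem}(ii) delivers $\xi\in\im\iota_*(S)$. My plan is to start with an arbitrary lift $\rho\in\xi$ and modify it by strict equivalence (i.e.\ by conjugating with elements of $I+M_n(\mf{m}_S)$) until every $\rho(t_{ab}^r)$ becomes an elementary transvection $t_{ab}^{c}$.

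The natural first move is to normalize $\rho$ on a single transvection: using a Nakayama-type successive approximation on the tower $S/\mf{m}_S^l$ and the rigidity of the lift (close to $I+e_{12}$ modulo $\mf{m}_S$), I would construct $K\in I+M_n(\mf{m}_S)$ with $K\rho(t_{12}^1)K^{-1}=t_{12}^1$. With this anchor in place, I would exploit relation~(\ref{rel-comm}) of Lemma~\ref{RelLemma}: for any pair $(a,b)$ with $\{a,1\}\cap\{b,2\}=\emptyset$, which exists \emph{because} $n\geq 4$, we have $t_{ab}^1$ commuting with $t_{12}^1$, so $\rho(t_{ab}^1)$ lies in the centralizer of $t_{12}^1$ in $\GL_n(S)$. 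This centralizer has a very restricted shape (zeroes in the first column below the top and in the second row off the diagonal, equal diagonal entries at $(1,1),(2,2)$), and by conjugating further with elements in this centralizer one can normalize $\rho(t_{ab}^1)=t_{ab}^1$ for such $(a,b)$. Iterating — at each stage the conjugating element must lie in the simultaneous centralizer of all previously normalized transvections — eventually produces, for a full generating collection of pairs, the normalization $\rho(t_{ab}^1)=t_{ab}^1$.

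Once this has been achieved, passing from $\rho(t_{ab}^1)$ to general $\rho(t_{ab}^r)$ is essentially the computation inside the proof of Lemma~\ref{AuxLem}(ii) in the case $n\geq 3$: using the commutator identity $t_{ac}^{rs}=[t_{ab}^r,t_{bc}^s]$ from~(\ref{rel[]}) together with the additivity~(\ref{rel+}), one forces $\rho(t_{ab}^r)$ into the form $t_{ab}^{c_{ab}(r)}$, and Lemma~\ref{AuxLem}(ii) completes the argument.

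The main obstacle lies squarely in the iterative normalization of the middle step. Each conjugation used to fix a new transvection must preserve all previously normalized ones, so one has to show that the intersection of several centralizers in $\GL_n(S)$ still contains sufficiently many invertible elements of the form $I+M_n(\mf{m}_S)$. The crucial role of the hypothesis $n\geq 4$ is precisely to provide multiple pairs $(a,b),(c,d)$ covering four distinct indices, so that the disjoint commutations of~(\ref{rel-comm}) supply enough independent constraints to ensure the intersection remains rich enough at every stage. For $n=2,3$ no such four-index configurations exist, which explains why those cases demand the separate analysis carried out later in the paper.
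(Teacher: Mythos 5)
Your high-level plan agrees in spirit with the paper's: reduce to surjectivity of $\iota_*(S)$, use the disjoint commutation relations afforded by $n\geq 4$ to constrain $\rho$ on elementary transvections, bring $\rho$ into the special form (\ref{spec-form}), and invoke Lemma~\ref{AuxLem}(ii). But the central normalization step — the thing the proof actually has to establish — is asserted rather than proven, and as stated it fails.

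Two concrete problems. First, your ``first move'' claims one can conjugate so that $\rho(t_{12}^1)=t_{12}^1$. This is already false whenever $R \neq S$: the correct target is $t_{12}^c$ for some $c\in S$ lifting $1\in\kk$, not $c=1$ (for instance $R$ of characteristic $p$ and $S$ of characteristic $0$ make $t_{12}^1\in\GL_n(S)$ of infinite order while $\rho(t_{12}^1)$ has finite $p$-power order). But the deeper issue is that you have no justification that $\rho(t_{12}^1)$ is conjugate to \emph{any} transvection $t_{12}^c$ by an element of $I+M_n(\mf{m}_S)$. Writing $\rho(t_{12}^1)=t_{12}^1+M$ with $M\in M_n(\mf{m}_S)$, a first-order computation in $S=\kk[\eps]$ shows that conjugation by $I+\eps N$ only reaches perturbations of the form $Ne_{12}-e_{12}N$; these never have a nonzero $(2,1)$-entry, so if $M(2,1)\neq 0$ no normalization is possible. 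Ruling out such $M$ — showing $M_{ab}^r(i,j)=0$ for $i\neq a$, $j\neq b$, $i\neq j$, and controlling the diagonal entries — is precisely the content of the paper's Claims~\ref{claim-commConclusion}--\ref{claim-finalRelations}, which you have not reproduced. The ``Nakayama-type successive approximation'' you invoke is not automatic: at each layer an obstruction must be shown to vanish, and the vanishing is exactly what the commutation relations and the $n\geq 4$ hypothesis are used for.

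Second, there is a circularity in the iterative scheme. You want to constrain $\rho(t_{ab}^1)$ by noting it lies in the centralizer of the already-normalized $t_{12}^1$ — but to normalize $t_{12}^1$ in the first place you need structural information about $\rho(t_{12}^1)$ that comes from \emph{its} commutation with the \emph{other} $\rho(t_{cd}^1)$, none of which are yet in normal form. You also never argue that the intersection of the relevant centralizers contains enough of $I+M_n(\mf{m}_S)$ for the iterated conjugations to exist. The paper avoids this circularity by inducting on the nilpotency index $n(S)$, so that at each stage only the top graded piece $\mf{m}_S^l/\mf{m}_S^{l+1}$ is in play (where $J:=M_n(\mf{m}_S^l)$ satisfies $\mf{m}_SJ=J^2=0$); there it controls all the error terms $M_{ab}^r$ \emph{simultaneously} from the group relations, and shows they all come from a single conjugation $I+X$. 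Finally, even after all $\rho(t_{ab}^1)$ are normalized, your last step misattributes the remaining work: Lemma~\ref{AuxLem}(ii) assumes condition (\ref{spec-form}) for all $r$, it does not derive it from the case $r=1$, so that derivation would still need its own centralizer argument. The overall strategy can likely be salvaged, but as written the proof has a genuine gap exactly where the theorem is hard.
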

   
\begin{proof} 
Let $S\in \Ob(\cat)$. By Lemma~\ref{AuxLem} we only need to show that $\iota_*(S)$ is surjective, i.e., that every lift of $\wbar{\rho}$ to $S$ is strictly equivalent to $\rho_f := \GL_n(f)|_G$ for some $f \in \Hom_{\cat} (R,S)$. Moreover, we may restrict to the case $S \in \Ob(\artcat)$, since all rings in $\cat$ are inverse limits of artinian rings. 

For $S\in \Ob(\artcat)$ let $n(S)$ be the smallest $j\in\mathbb{N}$ such that $\mf{m}_S^j=0$. We proceed by induction on $n(S)$. For $n(S)=1$, i.e., $S = \kk$, the statement is obvious. For the inductive step consider $S$ with $n(S) \geq 2$, a lift $\rho : G \rightarrow \GL_n(S)$ of $\wbar{\rho}$ and set $l:=n(S)-1$. By the inductive hypothesis, we may suppose (considering a strictly equivalent lift if necessary) that $\rho$ reduced to $S/ \mf{m}^l_S$ is induced by a morphism $g : R \rightarrow S/\mf{m}_S^l$. For every $r \in R$ choose $p_r \in S$ such that $p_r \equiv g(r) \mod \mf{m}_S^l$; for $r=1$ we choose $p_1=1$. This way \[\forall \ (a,b) \in \mf{J}, r\in R: \quad \rho( t_{ab}^r ) = t_{ab}^{p_r} + M_{ab}^r \quad , \quad \hbox{for some }  M_{ab}^r \in M_{n\times n}(\mf{m}_S^l).\] 
We will analyze the structure of the matrices $M_{ab}^r$ proving a series of claims. In the calculations we use the fact that $J:=M_{n\times n}(\mf{m}_S^l)$ is such a two-sided ideal of $M_n(S)$ that $\mf{m}_SJ = J^2 = 0$. 

\begin{claim}\label{claim-comm} If $a,b,c,d \in \mf{I}$ are such that $\{a,c\} \cap \{b,d\} = \emptyset$ then for all $r,s \in R$: 
\[ p_s( M_{ab}^re_{cd} - e_{cd}M_{ab}^r) = p_r( M_{cd}^se_{ab} - e_{ab}M_{cd}^s)\] 
\end{claim}

\begin{proof}
Since $t_{ab}^r$ and $t_{cd}^s$ commute (Lemma~\ref{RelLemma}, (\ref{rel-comm})), so do their lifts. Denoting $t:= t_{ab}^{p_r}$, $M:=M_{ab}^r$, $z := t_{cd}^{p_s}$, $N:=M_{cd}^s$ we obtain:

\begin{align*}
(t + M)\ (z + N) &= (z + N)\  (t + M)\\
tz \ + \ tN \ + \ Mz &=  zt \ + \ zM \ + \ Nt  \\
Mz- zM & = Nt - tN \\
M (I+p_se_{cd})- (I+p_se_{cd}) M & = N (I+p_re_{ab}) -  (I+p_re_{ab})N \\
p_s( M_{ab}^re_{cd} - e_{cd}M_{ab}^r) & = p_r( M_{cd}^se_{ab} - e_{ab}M_{cd}^s ) 
\end{align*} \end{proof}

\begin{claim}\label{claim-commConclusion} $\forall (a,b) \in \mf{J}, r\in R: \ \ M_{ab}^r(i,j)=0$ when $i\neq a$, $j\neq b$ and $i\neq j$. \end{claim}

\begin{proof} If we fix $r \in R$ and $(a,b) \in \mf{J}$ then given $j \in \mf{I} \setminus\{b\}$ we may choose $d \in \mf{I} \setminus \{a,b,j\}$ (here we use the assumption $|\mf{I}| \geq 4$). Such $a,b,j,d$ satisfy the assumptions of Claim~\ref{claim-comm}, so we obtain $M_{ab}^re_{jd} - e_{jd}M_{ab}^r = p_r( M_{jd}^1e_{ab} - e_{ab}M_{jd}^1 )$. If $i \in \mf{I}\setminus\{a,j\}$ then a comparison of the $(i,d)$-entries of both sides of the relation shows that $M_{ab}^r(i,j) = 0$. \end{proof} 

\begin{claim}\label{claim-tr} $\forall (a,b) \in \mf{J}, r\in R: \ \ \tr M_{ab}^r=0$. \end{claim}

\begin{proof} Lemma~\ref{LemmaCommutator} implies that $\det \rho( t_{ab}^r) = 1$, whereas $\det \rho( t_{ab}^r) = \prod_{i=1}^n (1 + M_{ab}^r(i,i)) = 1 + \tr M_{ab}^r$ by Claim~\ref{claim-commConclusion}. \end{proof}

\begin{claim}\label{claim-[]} $\forall (a,b) \in \mf{J}, r\in R: \ \ M_{ab}^r(i,i)=0$ for $i \in \mf{I}\setminus\{a,b\}$. \end{claim}

\begin{proof} Consider $r \in R$ and $(a,b) \in \mf{J}$. If $c \in \mf{I} \setminus \{a,b\}$ then since $|\mf{I}|\geq 4$ we may choose $d \in \mf{I} \setminus \{a,b,c\}$. Let $U_c := \{ A \in \GL_n(S) \ | \ \forall x\in \mf{I}\setminus\{c\}: A(x,c), A(c,x) \in \mf{m}_S^l \}$. It is easy to see that $U_c$ is a group and $\chi: U_c \rightarrow S^\times$, $M \mapsto M(c,c)$ a group homomorphism (due to the fact that $(\mf{m}_S^l)^2=0$). Moreover, $\rho(t_{ab}^r)$, $\rho(t_{ad}^1)$,  $\rho(t_{db}^r) \in U_c$ and since $[t_{ad}^1, t_{db}^r] = t_{ab}^r$ by Lemma~\ref{RelLemma}, (\ref{rel[]}), we have that $\chi(\rho(t_{ab}^r)) = [\chi(\rho(t_{ad}^1))\ , \ \chi(\rho(t_{db}^r)) ] = 1$. We conclude that $M_{ab}^r(c,c)=0$. \end{proof}

\begin{claim}\label{claim-trConclusion} $\forall (a,b) \in \mf{J}, r \in R: \ \ M_{ab}^r(a,a) = -M_{ab}^r(b,b)$. \end{claim}

\begin{proof} This is an immediate consequence of Claim~\ref{claim-tr} and Claim~\ref{claim-[]}. \end{proof}

\begin{claim}\label{claim-finalRelations} If $a,b,c,d \in \mf{I}$ are such that $\{a,c\} \cap \{b,d\} = \emptyset$ and $(a,b) \neq (c,d)$ then for all $r,s \in R$: 
\[
\left\{
\begin{array}{cc}
p_s M_{ab}^r (a,c) &= - p_r M_{cd}^s (b,d), \\
p_s M_{ab}^r (d,b) & = - p_r M_{cd}^s(c,a).
\end{array} 
\right.
\]
\end{claim}

\begin{proof} Thanks to Claim~\ref{claim-commConclusion} and Claim~\ref{claim-[]} the formula of Claim~\ref{claim-comm} reduces to
 \[ p_s \Big( M_{ab}^r(a,c) e_{ad} \ - \ e_{cb} M_{ab}^r(d,b) \Big) \ = \ p_r \Big( M_{cd}^s(c,a) e_{cb} \ - \ e_{ad} M_{cd}^s(b,d) \Big), \]
If $(a,b) \neq (c,d)$ then the coefficients at $e_{ad}$ (resp. $e_{cb}$) on both sides must be equal. \end{proof}

\begin{claim} There exists $X \in M_n(\mf{m}_S^l)$ such that $\forall (a,b) \in \mf{J}, r\in R$ $\exists d_{ab}^r\in \mf{m}_S^l$: 
\[M_{ab}^r = p_r (e_{ab}X - X e_{ab}) + d_{ab}^r e_{ab}.\]\end{claim}

\begin{proof} Let $(a,b) \in \mf{J}$ and $c,d \in \mf{I}\setminus\{a,b\}$. The quadruple $(a,b,a,d)$ satisfies the assumptions of Claim~\ref{claim-finalRelations}, so $M_{ab}^1(a,a) = -M_{ad}^1(b,d)$ (the first relation). Combining with Claim~\ref{claim-trConclusion} we obtain $M_{ab}^1(b,b) = M_{ad}^1(b,d)$, so $M_{ay}^1(b,y)$ is independent of the choice of $y \in \mf{I}\setminus\{a\}$. We will denote this common value by $Y(b,a)$. Analogously, using the quadruple $(a,b,c,b)$ and the second relation of Claim~\ref{claim-finalRelations} we prove that the value of $M_{xb}^1(x,a)$, with $x$ ranging over $\mf{I}\setminus\{b\}$, is constant. We will denote it $X(b,a)$. 

Setting $X(a,a) := Y(a,a) := 0$ for all $a\in \mf{I}$ we obtain well defined matrices $X,Y \in M_n(\mf{m}_S^l)$. Since $M_{ab}^1(a,a)=-M_{ab}^1(b,b)$ by Claim~\ref{claim-trConclusion}, we have $X(b,a) = -Y(b,a)$ for all $(a,b)\in \mf{J}$, hence $X = - Y$. 

Consider $(a,b) \in \mf{J}$ and $c \in \mf{I}\setminus\{b\}$. Then it is possible to find $d \in \mf{I}$ such that $a,b,c,d$ satisfy the assumptions of Claim~\ref{claim-finalRelations} (choose any $d\in \mf{I}\setminus\{a,b\}$ if $a=c$ or $d=b$ if $a\neq c$). The first relation gives $\forall r\in R: M_{ab}^r(a,c) = -p_rM_{cd}^1(b,d) = p_r X(b,c)$. Similarily, $\forall d \in \mf{I}\setminus\{a\}, r\in R: M_{ab}^r (d,b) = -p_rX(d,a)$. We conclude that $M_{ab}^r = p_r (e_{ab}X - X e_{ab}) + M_{ab}^r(a,b) e_{ab}.$  \end{proof}

Let $X$ be as in the last claim and consider the representation $\tilde{\rho} := (I+X) \rho (I+X)^{-1}$. It follows that $\tilde{\rho} (t_{ab}^{p_r} + M_{ab}^r ) =
t_{ab}^{p_r} + M_{ab}^r + Xt_{ab}^{p_r} - t_{ab}^{p_r}X = t_{ab}^{ \phi_{ab}(r) }$, where $\phi_{ab}(r) := p_r + d_{ab}^r$. The lift $\tilde{\rho}$, strictly equivalent to $\rho$, satisfies thus (\ref{spec-form}) and Lemma~\ref{AuxLem} finishes the proof of the theorem.
\end{proof}

\begin{cor} Every $R \in \Ob(\cat)$ can be obtained as a universal deformation ring of a continuous representation of a profinite group satisfying $\Phi_p$. \end{cor}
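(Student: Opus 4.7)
The plan is to deduce the corollary directly from Theorem~\ref{MainThm} together with the observations already made at the start of Section~5. Given an arbitrary $R \in \Ob(\cat)$, I would fix any $n \geq 4$ (for concreteness, $n = 4$), set $G := \SL_n(R)$, and take $\rhobar : G \rightarrow \GL_n(\kk)$ to be the natural representation induced by the reduction $R \twoheadrightarrow \kk$, exactly as in Assumption~\ref{Ass}. The corollary then amounts to assembling three facts about this specific $(G,\rhobar)$.

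First, I would recall that $G = \varprojlim_k \SL_n(R/\mf{m}_R^k)$ is a profinite group and that $\rhobar$ is continuous for its natural topology. Second, to confirm the $p$-finiteness condition, I would quote verbatim the argument given in the paragraph preceding Section~5.1: the subgroups $V_{\mf{m}_R^k}$ of Lemma~\ref{GenLemma} form a basis of open neighbourhoods of the identity in $G$, and each such $V_{\mf{m}_R^k}$ is topologically finitely generated (being generated by finitely many elements of the form $t_{ab}^r$), so $\CHom(V_{\mf{m}_R^k}, \mathbb{Z}/p\mathbb{Z})$ is finite; hence $\Phi_p$ holds for $G$. Third, Theorem~\ref{MainThm} applied to our choice of $n \geq 4$ tells us that the inclusion $\iota$ is a universal lift of $\rhobar$, and therefore $R$ itself is the universal deformation ring of $\rhobar$.

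There is essentially no obstacle here: the substantive work is entirely in Theorem~\ref{MainThm} and in the verification of $\Phi_p$, both of which are already established. The only genuine content of the corollary is the observation that the construction $R \mapsto (\SL_n(R), \rhobar)$ is uniform in $R$, so every complete noetherian local ring with finite residue field arises this way. I would close with a one-line remark that this gives a complete affirmative answer to the inverse problem for universal deformation rings.
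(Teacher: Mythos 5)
Your proposal is exactly the argument the paper intends: the corollary is stated without proof immediately after Theorem~\ref{MainThm}, since it follows at once by fixing any $n\ge 4$, taking $G=\SL_n(R)$ with the natural $\rhobar$, and invoking the $\Phi_p$ verification already given at the start of Section~5. Your write-up is correct and matches the paper's (implicit) reasoning.
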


\section{Lower dimensions}

In this section we continue working under Assumption~\ref{Ass} and discuss the possibility of extending Theorem~\ref{MainThm} to the cases $n=2$ and $n=3$. 

\subsection{Case $n=3$}

\begin{thm} \label{MainThmDim3}
Suppose $n=3$, $\kk \neq \GF_2$. Then $\iota$ is universal. 
\end{thm}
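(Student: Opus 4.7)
The plan is to mirror the inductive framework of Theorem~\ref{MainThm}. One reduces to $S\in\Ob(\artcat)$ and inducts on $n(S)$; in the inductive step assume $\rho\equiv\rho_g\pmod{\mf{m}_S^l}$ for some $g:R\to S/\mf{m}_S^l$, choose lifts $p_r\in S$ of $g(r)$ with $p_1=1$, and write $\rho(t_{ab}^r)=t_{ab}^{p_r}+M_{ab}^r$ with $M_{ab}^r\in J:=M_3(\mf{m}_S^l)$, noting $J^2=\mf{m}_S J=0$. The goal, as in Theorem~\ref{MainThm}, is to produce $X\in J$ such that $(I+X)\rho(I+X)^{-1}$ satisfies condition $(\diamondsuit)$ of Lemma~\ref{AuxLem}. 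Claims~\ref{claim-comm}, \ref{claim-tr}, \ref{claim-finalRelations} and the final assembly of $X$ all transfer to dimension~$3$ without change, since the index triples they require still exist. What genuinely fails when $n=3$ is the portion of Claim~\ref{claim-commConclusion} and Claim~\ref{claim-[]} that picked an index outside $\{a,b,c\}$: for each $(a,b)\in\mf{J}$, letting $c$ be the unique element of $\mf{I}\setminus\{a,b\}$, one is left needing to establish the vanishing of the single off-diagonal entry $M_{ab}^r(b,c)$ and the single diagonal entry $M_{ab}^r(c,c)$.

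The off-diagonal entry comes for free from Claim~\ref{claim-comm} itself, applied with the admissible pair $(c,b)$, for which $\{a,c\}\cap\{b,b\}=\emptyset$: inspecting the $(c,c)$-entry of $p_s(M_{ab}^r e_{cb}-e_{cb}M_{ab}^r)=p_r(M_{cb}^s e_{ab}-e_{ab}M_{cb}^s)$ reduces it to $-p_s M_{ab}^r(b,c)=0$, and setting $s=1$ gives $M_{ab}^r(b,c)=0$. This step needs nothing beyond $n\geq 3$.

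The diagonal vanishing is where the hypothesis $\kk\neq\GF_2$ enters, and it is also the main obstacle. Pick $\bar\lambda\in\kk^\times\setminus\{1\}$ with Teichm\"uller lift $\tilde\lambda\in\mu_R$ (so $\tilde\lambda-1\in R^\times$), and let $\hat\lambda\in\mu_S$ be the Teichm\"uller lift of $\bar\lambda$. Set $D:=d_{ac}^{\tilde\lambda}\in\SL_3(R)$, so that Lemma~\ref{RelLemma}~\eqref{rel-conj} yields $Dt_{ab}^r D^{-1}=t_{ab}^{\tilde\lambda r}$. Writing $\rho(D)=d_{ac}^{\hat\lambda}+N$ with $N\in J$ and expanding $\rho(D)\rho(t_{ab}^r)\rho(D)^{-1}=\rho(t_{ab}^{\tilde\lambda r})$ modulo $J^2$, one checks that the $N$-dependent contributions cancel at the $(c,c)$-entry: diagonal conjugation fixes the $(c,c)$-entry of any matrix, and the surviving error terms all factor through $e_{ab}$, whose $(c,c)$-entry is zero because $c\notin\{a,b\}$. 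The equation therefore reduces to $M_{ab}^r(c,c)=M_{ab}^{\tilde\lambda r}(c,c)$. Additivity of $r\mapsto M_{ab}^r(c,c)$, obtained from $\rho(t_{ab}^r)\rho(t_{ab}^s)=\rho(t_{ab}^{r+s})$ with the cross terms $p_r e_{ab}M_{ab}^s$ and $p_s M_{ab}^r e_{ab}$ vanishing at $(c,c)$ for the same reason, then forces $M_{ab}^{(\tilde\lambda-1)r}(c,c)=0$ for every $r\in R$; since $\tilde\lambda-1\in R^\times$, one concludes $M_{ab}^r(c,c)=0$.

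With both missing vanishings restored, Claim~\ref{claim-trConclusion} follows, and the remainder --- the construction of $X$ satisfying $M_{ab}^r=p_r(e_{ab}X-Xe_{ab})+d_{ab}^r e_{ab}$, strict equivalence by $I+X$, and the final application of Lemma~\ref{AuxLem} --- is a direct transcription from the proof of Theorem~\ref{MainThm}. The hypothesis $\kk\neq\GF_2$ is used only in the diagonal step, but it is truly essential there: over $\GF_2$ the only Teichm\"uller scalar is $1$, so $\tilde\lambda-1$ collapses to zero and the multiplier argument disappears.
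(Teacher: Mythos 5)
Your proposal is correct and follows essentially the same route as the paper. Both arguments reduce to re-establishing Claims~\ref{claim-commConclusion} and~\ref{claim-[]} when $n=3$, obtain the missing off-diagonal entry $M_{ab}^r(b,c)=0$ from the commutation of $t_{ab}^r$ with $t_{cb}^1$ (i.e.\ Claim~\ref{claim-comm} applied with $(c,d)=(c,b)$), and obtain $M_{ab}^r(c,c)=0$ by conjugating with a diagonal $d_{ac}^{\lambda}$ with $\lambda,\lambda-1\in R^\times$ via relation~\eqref{rel-conj}; the paper merely packages that diagonal step through the multiplicative character $\chi\colon U_c\to S^\times$, $M\mapsto M(c,c)$, where you instead perform the explicit conjugation-and-additivity computation.
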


\begin{proof}
A closer look at the proof of Theorem~\ref{MainThm} shows that assuming Claim~\ref{claim-commConclusion} and Claim~\ref{claim-[]} the rest of the argument would hold also for $n\geq 3$. We provide thus a different argument for both of the claims in case $n=3$ and $\kk \neq \GF_2$ (this second assumption is actually needed only for proving Claim~~\ref{claim-[]}). In what follows, we assume $\mf{I} = \{a,b,c\}$.

A proof of Claim~\ref{claim-commConclusion}: Considering $(a,b) \in \mf{J}$, $r\in R$ we need to show that $M_{ab}^r(i,j)=0$ for $(i,j) \in \{ (b,a) , (c,a), (b,c) \}$. We see that the fact that $t_{ab}^r$ and $t_{ac}^1$ commute implies $M_{ab}^r(i,a)=0$ for $i \neq a$, just as in the case $n=4$. Similarily, the fact that $t_{ab}^r$ and $t_{cb}^1$ commute implies $M_{ab}^r(b,j)=0$ for $j\neq b$. 

A proof of Claim~\ref{claim-[]}: Let $(a,b) \in \mf{J}$, $r\in R$ and define $U_c$, $\chi$ just as in the case $n=4$. We need to show $M_{ab}^r(c,c)=0$. Making use of the assumption $\kk \neq \GF_2$ we choose $\lambda \in R$ such that $\lambda \not \equiv 0,1$ mod $\mf{m}_S$ and consider elements $d := d_{ac}^\lambda$, $t:=t_{ab}^{\frac{r}{\lambda-1}}\in U_c$. According to Lemma~\ref{RelLemma}, relation~(\ref{rel-conj}), $[d, t] = t_{ab}^{\frac{\lambda r}{\lambda-1}}t_{ab}^{\frac{-r}{\lambda-1}} = t_{ab}^r$. Evaluating $\chi$ at this element we conclude that $M_{ab}^r(c,c)=0$, just as in the case $n=4$. 
\end{proof}

As the following lemma shows, the case $\kk = \GF_2$ must really be excluded in Theorem~\ref{MainThmDim3}.

\begin{prop}\label{PropSpec3} Assume $n=3$ and $\kk = \GF_2$.
\begin{enumerate}[(i)]
\item There exists a lift $\rho_0$ of $\rhobar$ to $\mathbb{Z}_2$.
\item There is no $R \in \Ob(\cat)$ for which $\iota$ is universal. 
\end{enumerate}
\end{prop}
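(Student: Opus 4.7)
The plan is to exploit the factorization of $\rhobar$ through the finite simple group $G_0 := \SL_3(\GF_2) = \GL_3(\GF_2) \cong \PSL_2(\GF_7)$, via the entrywise reduction $\pi : \SL_3(R) \twoheadrightarrow G_0$.

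For (i), it suffices to produce a homomorphism $\tilde{\iota} : G_0 \to \GL_3(\mathbb{Z}_2)$ whose reduction mod $2$ is the natural inclusion $G_0 \hookrightarrow \GL_3(\GF_2)$, and then set $\rho_0 := \tilde{\iota} \circ \pi$. Over $\mathbb{C}$, the group $G_0$ has exactly two irreducible $3$-dimensional characters, conjugate over $\mathbb{Q}(\sqrt{-7})$. Since $-7 \equiv 1 \pmod{8}$, $\sqrt{-7} \in \mathbb{Z}_2$, so both representations are realizable over $\mathbb{Q}_2$, and averaging an arbitrary $\mathbb{Z}_2$-lattice over $G_0$ yields realizations over $\mathbb{Z}_2$. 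A Brauer-character comparison on $2$-regular classes identifies which of the two simple $\GF_2$-representations of $G_0$ is obtained as the mod-$2$ reduction of each lift; one of the two is the natural representation (the other its Frobenius twist), and one picks $\tilde{\iota}$ to be the former.

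For (ii), assume for contradiction that $\iota$ is universal for some $R \in \Ob(\cat)$. Then $\iota_{*}(\mathbb{Z}_2)$ is surjective, so there exist $f \in \Hom_{\cat}(R, \mathbb{Z}_2)$ and $K \in I + M_3(2\mathbb{Z}_2)$ with $\rho_0 = K \cdot (\GL_3(f) \circ \iota) \cdot K^{-1}$. Evaluate both sides at the generator $t_{12}^1 \in \SL_3(R)$. The left-hand side, $\tilde{\iota}(\pi(t_{12}^1)) = \tilde{\iota}(t_{12}^1)$, is non-identity in $\GL_3(\mathbb{Z}_2)$ because its reduction mod $2$ is the non-trivial transvection in $G_0$, yet squares to $I$ since $(t_{12}^1)^2 = I$ in $G_0$; hence it has order exactly~$2$. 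The right-hand side equals $K \cdot t_{12}^{f(1)} \cdot K^{-1} = K \cdot t_{12}^1 \cdot K^{-1}$, a conjugate of the mixed-characteristic transvection $t_{12}^1 \in \GL_3(\mathbb{Z}_2)$, which has infinite order because $(t_{12}^1)^n = t_{12}^n \neq I$ for every $n \neq 0$. Conjugation preserves the order of an element, which is the desired contradiction.

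The main obstacle is part~(i): producing the characteristic-zero lift $\tilde{\iota}$, which rests on the modular representation theory of $\PSL_2(\GF_7)$ together with $-7$ being a square in $\mathbb{Z}_2$. Once $\tilde{\iota}$ is in hand, (ii) is simply an order-mismatch observation: the $2$-torsion element $\tilde{\iota}(t_{12}^1)$ cannot be conjugate in $\GL_3(\mathbb{Z}_2)$ to the torsion-free transvection $t_{12}^1$.
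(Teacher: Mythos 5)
Your proof is correct, and both parts take a genuinely different route from the paper's. For part~(i), the paper writes down an explicit $3$-dimensional representation of $\SL_3(\GF_2) \cong \PSL_2(\GF_7)$ over $\mathbb{Z}[\omega]$ pulled from the ATLAS, verifies that the proposed generators satisfy Sunday's presentation of $\PSL_2(\GF_7)$, and then specializes $\omega$ to a root of $X^2+X+2$ in $1+2\mathbb{Z}_2$. Your argument replaces this computation with the conceptual observation that the two $3$-dimensional ordinary irreducibles are conjugate over $\mathbb{Q}(\sqrt{-7})$, that $-7 \equiv 1 \pmod 8$ places $\sqrt{-7}$ in $\mathbb{Z}_2$, that lattice-averaging produces $\mathbb{Z}_2$-forms, and that a Brauer-character comparison pins down which one reduces to the natural module. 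That is cleaner but relies on more machinery, and you should know that the other $3$-dimensional simple $\GF_2 G_0$-module is the \emph{dual} (contragredient) of the natural one, not a Frobenius twist --- over $\GF_2$ the Frobenius endomorphism is the identity, so there are no nontrivial Frobenius twists. For part~(ii), the paper splits into the cases $\Char R \neq 0$ (where $\Hom_{\artcat}(R,\mathbb{Z}_2)$ is empty) and $\Char R = 0$ (where it evaluates at $t_{12}^2$, which $\rho_0$ kills but $\GL_n(f)\circ\iota$ does not). Your version evaluates at $t_{12}^1$ and compares orders: the image under $\rho_0$ has order $2$, while $t_{12}^1 \in \GL_3(\mathbb{Z}_2)$ has infinite order, and conjugation preserves order. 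This avoids the characteristic case-split (the case with no $f$ is handled automatically, since surjectivity of $\iota_*(\mathbb{Z}_2)$ already fails when the domain is empty), and is a nice uniform variant of the paper's kernel-mismatch argument.
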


\begin{proof}

$(i)$ Since $\im \rhobar \subseteq \SL_3(\GF_2)$, it is enough to prove the claim for $R=\kk$. There exists an irreducible 3-dimensional representation of $\SL_3(\GF_2)$ over the ring $\mathbb{Z}[\omega]$, $\omega = \frac{-1+\sqrt{7}}{2}$,  defined in \cite{Atlas} by
\[ 
A := \begin{pmatrix} 1&1&0 \\ 0&1&0 \\ 0&0&1 \end{pmatrix} \ \mapsto \  \begin{pmatrix} 1 & \omega & -1-\omega \\ 0 & -1 & 0 \\ 0 & 0 &-1\end{pmatrix} 
\qquad, \qquad
B := \begin{pmatrix} 0&1&0 \\ 0&0&1 \\ 1&0&0 \end{pmatrix} \ \mapsto  \ \begin{pmatrix} 0&1&0 \\ 0&0&1 \\ 1&0&0 \end{pmatrix}. 
\]	
In order to check that a representation of $\SL_3(\GF_2) = \langle A, B \rangle$ may be defined this way recall that $\SL_3(\GF_2)$ is known to be isomorphic to $\PSL_2(\GF_7)$, which has an abstract presentation $\langle S, T \ | \  S^7 = T^2 = (ST)^3 = (S^4T)^4 =1 \rangle$ due to Sunday (\cite{Sunday}). One checks directly that the defining relations are satisfied both by $T := A$, $S := BA$ and their proposed images.  We obtain $\rho_0$ by sending $\omega$ to the root of $X^2+X+2$ that lies in $1+2\mathbb{Z}_2$.

$(ii)$ The claim follows immediately from part $(i)$ in case $\Char R \neq 0$. If $\Char R = 0$ consider the element $t := t_{12}^2 \in G$ and $f \in \Hom(R, \mathbb{Z}_2)$. Since $\rho_0(\rhobar(t)) = I$ and $\GL_n(f)(t) \neq I$,  we see that $\rho_0\circ \rhobar$ and $\iota_{*}(f) = \GL_n(f)\circ \iota$ are not strictly equivalent. Hence, $\iota$ is not universal.
\end{proof}

\subsection{Case $n=2$} \ 

We use a totally different approach. Given $R \in \Ob(\cat)$ the group $\SL_n(R)$ contains a subgroup $D_R := \{  \M{a}{}{}{a^{-1}} \ | \ a \in \mu_R \}$. Since $D_R$ is the isomorphic image of $D_{W(\kk)}$ under the unique $\cat$-morphism $W(\kk)\rightarrow R$, we will identify the groups $D_R$ obtained for different $R\in \Ob(\cat)$ and write $D$ for each of them.

\begin{lem} \label{SpecFormLem}
If $n=2$ and $S \in \Ob(\cat)$ then every deformation of $\wbar{\rho}$ to $S$ has a representative $\rho$ such that $\rho|_D = id_D $.
\end{lem}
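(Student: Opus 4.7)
The plan is to exploit that $D$, being the diagonal embedding of the Teichm\"uller image of $\kk^\times$, is a finite cyclic group of order $\#\kk - 1$, coprime to $p := \Char \kk$. Hence Lemma~\ref{ProjLemma} applies to the restriction $\rhobar|_D : D \to \GL_2(\kk)$, and yields that $\Witt(\kk)$ is the universal deformation ring of $\rhobar|_D$. Since there is a unique $\cat$-morphism $\Witt(\kk) \to S$ for every $S \in \Ob(\cat)$, the set $\Def_{\rhobar|_D}(S) \cong h_{\Witt(\kk)}(S)$ consists of a single element: any two lifts of $\rhobar|_D$ to $S$ are strictly equivalent.

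Under the conventions fixed just before the lemma statement, the inclusion $D \hookrightarrow \SL_2(S) \subset \GL_2(S)$ is itself a lift of $\rhobar|_D$, which I will call $\mathrm{id}_D$. Now let $[\rho] \in \Def_{\rhobar}(S)$ and pick any representative $\rho : G \to \GL_2(S)$; then the restriction $\rho|_D$ is another lift of $\rhobar|_D$ to $S$. By the uniqueness above, there exists $K \in I + M_2(\mf{m}_S)$ with $K \rho|_D K^{-1} = \mathrm{id}_D$. Replacing $\rho$ by $K \rho K^{-1}$ gives a representative of $[\rho]$ that satisfies $\rho|_D = \mathrm{id}_D$, finishing the proof.

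The only point requiring care, and the step I would verify explicitly, is that $\mathrm{id}_D$ really is a lift of $\rhobar|_D$, i.e.~that under the canonical identifications of $D \subset \SL_2(R)$, $D \subset \SL_2(S)$ and $D \subset \SL_2(\kk)$ arising from the Teichm\"uller lift and the unique structural morphisms out of $\Witt(\kk)$, reduction modulo $\mf{m}_S$ of the inclusion $D \hookrightarrow \GL_2(S)$ recovers exactly $\rhobar|_D$. This compatibility is built into the setup immediately preceding the lemma, so once it is acknowledged, the argument collapses to a single application of Lemma~\ref{ProjLemma}; there is no genuine obstacle.
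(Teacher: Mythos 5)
Your argument is correct and is essentially identical to the paper's: both invoke Lemma~\ref{ProjLemma} on the order-$(\#\kk-1)$ group $D$, deduce that there is exactly one deformation of $\rhobar|_D$ to $S$ (you make the intermediate step via the unique $\cat$-morphism $\Witt(\kk)\to S$ explicit, which the paper leaves implicit), and then conjugate a given representative to agree with $\mathrm{id}_D$ on $D$.
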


\begin{proof}
The group $D$ has order $|\kk|-1$, which is coprime to $p$, so by Lemma~\ref{ProjLemma} representation $\rhobar|_{D}$ has precisely one deformation to $S$.	On the other hand, $id_D$ clearly is a lift of $\rhobar|_{D}$. We conclude that given a lift of $\rhobar$ to $S$, we may conjugate it by a matrix in $I+M_n(\mf{m}_S)$ in order to obtain a strictly equivalent lift $\rho$ with the property $\rho|_D = id_D$. 
\end{proof}

\begin{thm} \label{MainThmDim2}
Suppose $n=2$ and $\kk \neq \GF_2$, $\GF_3$, $\GF_5$. Then $\iota$ is universal.
\end{thm}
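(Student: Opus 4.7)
By Lemma~\ref{AuxLem} and the usual reduction to $\artcat$ via inverse limits, it suffices to show that every lift $\rho$ of $\rhobar$ to an artinian $S$ admits a strictly equivalent representative satisfying condition~($\diamondsuit$): $\rho(t_{ab}^r) = t_{ab}^{c_{ab}^r}$ for all $(a,b) \in \{(1,2), (2,1)\}$. I proceed by induction on $n(S)$, with the base case $S = \kk$ trivial.

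The central normalization specific to $n = 2$ is Lemma~\ref{SpecFormLem}: I begin with a lift satisfying $\rho|_D = \text{id}_D$. The conjugation action of $D$ on $M_2(S)$ decomposes the space into weight components --- diagonal entries of weight $0$, $e_{12}$-entry of weight $+2$, $e_{21}$-entry of weight $-2$ --- and the normalization transplants this to force, writing $\rho(t_{12}^r) = \begin{pmatrix} 1 + a(r) & b(r) \\ c(r) & 1 + d(r) \end{pmatrix}$, the transformation laws $a(u^2 r) = a(r)$, $d(u^2 r) = d(r)$, $b(u^2 r) = u^2 b(r)$, $c(u^2 r) = u^{-2} c(r)$ for every $u \in \mu_R$, and analogously for $\rho(t_{21}^r)$.

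For the inductive step (with $l := n(S) - 1 \geq 1$), a further diagonal conjugation (compatible with the normalization since diagonals centralize $D$) lets me assume that $\rho$ modulo $\mf{m}_S^l$ equals $\GL_2(g) \circ \iota$ for some ring homomorphism $g : R \to S/\mf{m}_S^l$. Choosing a set-theoretic lift $f_0$ of $g$ preserving Teichm\"uller representatives, the errors $M^r := \rho(t_{12}^r) - t_{12}^{f_0(r)}$ and $N^r := \rho(t_{21}^r) - t_{21}^{f_0(r)}$ lie in $M_2(\mf{m}_S^l)$, where $(\mf{m}_S^l)^2 = 0$. Expanding the group law $\rho(t_{12}^r)\rho(t_{12}^s) = \rho(t_{12}^{r+s})$ and the determinant constraint $\det\rho(t_{12}^r) = 1$, together with the abbba-type identity~(\ref{rel-abba2}) applied to $u \in \mu_R$ (for which $\rho(d_{12}^u) = d_{12}^u$), produces a system of $\kk$-linear constraints on the entries of $M^r$ and $N^r$. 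The hypothesis $\kk \notin \{\GF_2, \GF_3, \GF_5\}$ is equivalent to the existence of $u \in \mu_R$ with $u^2 \notin \{\pm 1\}$, which renders the character $v \mapsto v^{-2}$ on the subgroup of squares $\{u^2 : u \in \mu_R\} \subseteq \mu_R$ non-trivial. This non-triviality is exactly what is needed to solve the system and conclude that the weight-$0$ and ``wrong-weight'' components of $M^r$ and $N^r$ vanish, leaving only the weight-$(+2)$ part of $M^r$ and the weight-$(-2)$ part of $N^r$; these can be absorbed into a redefined ring-hom-like map $f_1 : R \to S$, giving ($\diamondsuit$). Lemma~\ref{AuxLem}$(ii)$ then completes the induction.

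\textbf{Main obstacle.} The hardest part is the linear-algebraic elimination of the previous paragraph, which cannot be done in the direct style of Theorems~\ref{MainThm} and~\ref{MainThmDim3} because for $n = 2$ there are no commutator identities $[t_{ab}^r, t_{bc}^s] = t_{ac}^{rs}$ available. Moreover, the diagonal entries $a$ and $d$ are not themselves additive --- they satisfy cocycle identities like $a(r+s) = a(r) + a(s) + \bar r\, c(s)$ together with $a(r) + d(r) = \bar r\, c(r)$ --- so one must genuinely exploit the abbba relation~(\ref{rel-abba2}) coupled with the non-trivial character sum on $\{u^2 : u \in \mu_R\}$. The excluded cases $\kk \in \{\GF_3, \GF_5\}$ are precisely those for which $\{u^2 : u \in \mu_R\} \subseteq \{\pm 1\}$, the character $v \mapsto v^{-2}$ becomes trivial, and the resulting system admits non-zero solutions corresponding to exotic lifts not of the form $\GL_2(f) \circ \iota$.
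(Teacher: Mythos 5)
Your proposal takes a genuinely different route: you import the induction on artinian length from the proof of Theorem~\ref{MainThm}, whereas the paper's own proof of Theorem~\ref{MainThmDim2} is a direct, non-inductive argument valid at once for every $S \in \Ob(\cat)$. After the same normalization $\rho|_D = \mathrm{id}_D$ via Lemma~\ref{SpecFormLem} and the same choice of $\alpha \in \mu_R$ with $\alpha^4 \neq 1$ (whose existence is precisely the hypothesis $\kk \notin \{\GF_2,\GF_3,\GF_5\}$), the paper writes $\rho(t_{12}^r) = \M{a}{b}{c}{d}$ for $r\in R^\times$; since $\delta := d(\alpha,\alpha^{-1})$ satisfies $\rho(\delta)=\delta$ and $\delta t_{12}^r\delta^{-1}=t_{12}^{\alpha^2 r}$, the commutativity of $t_{12}^r$ with $t_{12}^{\alpha^2 r}$ yields $(\alpha^4-1)bc=0$ in one line, hence $c=0$, so $\rho(t_{12}^r)$ is upper triangular; then writing $t_{12}^r=[\delta,t_{12}^s]$ with $s:=r(\alpha^2-1)^{-1}$ forces the diagonal of $\rho(t_{12}^r)$ to be the identity, because a commutator of upper-triangular matrices has trivial diagonal. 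Your outline is plausibly salvageable --- the group law $\rho(t_{12}^{r+s})=\rho(t_{12}^r)\rho(t_{12}^s)$ does contain the commutativity you need, and symmetrizing the cocycle identity you yourself write down, $a(r+s)=a(r)+a(s)+\bar r\,c(s)$, gives $\bar r\, c(s)=\bar s\,c(r)$, which combined with the $D$-weight law $c(u^2 r)=u^{-2}c(r)$ yields $(u^4-1)\bar r\, c(r)=0$ and kills $c$ --- but you leave exactly this crucial elimination unproved, and the tools you flag (relation~(\ref{rel-abba2}) and a ``character sum'' over $(\mu_R)^2$) are not the ones that actually do the work; a single well-chosen $\alpha$ and the commutativity constraint suffice. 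Note also that your claim that the argument ``cannot be done in the direct style'' is too strong: the commutator identities of Theorems~\ref{MainThm}, \ref{MainThmDim3} are indeed unavailable for $n=2$, but the paper replaces them with the $D$-conjugation trick and still avoids the induction entirely, giving a substantially shorter proof.
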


\begin{proof}
Let $S \in \Ob(\cat)$. According to Lemma~\ref{AuxLem} we only need to show that every lift $\rho$ of $\wbar{\rho}$ to $S$ is strictly equivalent to $\rho_f := \GL_n(f)|_G$ for some $f \in \Hom_{\cat} (R,S)$. Applying Lemma~\ref{SpecFormLem} we will restrict to the $\rho$ satisfying $\rho|_D = id_D$. Using the assumption $\kk \neq \GF_2$, $\GF_3$, $\GF_5$ let us choose $\alpha \in \mu_R$ such that $\alpha^4 \neq 1$, set $\delta:=d(\alpha,\alpha^{-1}) \in D$ and note that $\rho( \delta ) = \delta$. Finally, we introduce the convention of writing $t^r$ in place of $t_{12}^r$. 

Consider $t^r \in G$, $r \in R^\times$ and suppose $\rho(t^r) = \M{a}{b}{c}{d}$. Then due to Lemma~\ref{RelLemma}, (\ref{rel-conj}):
\[ \rho(t^{\alpha^2r}) = \rho(\delta t^r \delta^{-1}) = \delta \MM{a}{b}{c}{d} \delta^{-1} = \MM{a}{ b\alpha^2}{c\alpha^{-2}}{d}. \]
As $t^r$ and $t^{\alpha^2r}$ commute, so do their lifts. This implies $a^2 + \alpha^2 bc = a^2 + \alpha^{-2} bc$, i.e., $(\alpha^4-1)bc = 0$ and consequently $c=0$, since $(\alpha^4-1)b$ is invertible. Thus $\rho(t^r)$ is an upper triangular matrix for every $r \in R^\times$. By Lemma~\ref{RelLemma}, (\ref{rel-conj}) we have $t^r = [ \delta, t^s]$ for $s := r(\alpha^2-1)^{-1}$, so $\rho(t^r)= [\delta, \rho(t^s)]$. Both $\delta$ and $\rho(t^s)$ are upper triangular, so we conclude that the diagonal entries of $\rho(t^r)$ are equal to $1$, i.e., there exists $f : R^\times \rightarrow S$ such that $\rho(t^r) = t^{f(r)}$ for all $r\in R^\times$. Since $R^\times + R^\times = R$, this result immediately extends to the whole of $R$. We similarily obtain an analogous result for $\rho(t_{21}^r)$ and see that lift $\rho$ satisfies condition (\ref{spec-form}) of Lemma~\ref{AuxLem}, hence $[\rho] \in \iota_{*}(h_R(S))$.
\end{proof}

\begin{prop}\label{PropSpec2} Assume $n=2$ and $\kk \in \{ \GF_2, \GF_3, \GF_5 \}$. 
\begin{enumerate}[(i)]
\item There exists a lift $\rho_0$ of $\rhobar$ to, respectively, $\mathbb{Z}_2$, $\mathbb{Z}_3$ or $\mathbb{Z}_5[\sqrt{5}]$. 
\item There is no $R \in \Ob(\cat)$ for which $\iota$ is universal.
\end{enumerate}
\end{prop}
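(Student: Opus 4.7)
The plan is to mirror the two-step structure of Proposition~\ref{PropSpec3}. For part (i), I would exhibit, for each $\kk \in \{\GF_2, \GF_3, \GF_5\}$, a faithful two-dimensional continuous representation $\sigma_\kk : \SL_2(\kk) \to \GL_2(T_\kk)$, where $T_\kk$ is respectively $\mathbb{Z}_2$, $\mathbb{Z}_3$ or $\mathbb{Z}_5[\sqrt{5}]$, whose reduction modulo $\mf{m}_{T_\kk}$ agrees, after conjugation by an element of $\GL_2(\kk)$, with the standard embedding $\SL_2(\kk) \hookrightarrow \GL_2(\kk)$. Composing $\sigma_\kk$ with $\rhobar : G = \SL_2(R) \twoheadrightarrow \SL_2(\kk)$ and absorbing the adjustment by conjugating with a lift of the $\GL_2(\kk)$-element then gives the required $\rho_0$.

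Concretely: for $\kk = \GF_2$, I would take the standard integral reflection representation of $S_3 \cong \SL_2(\GF_2)$ on $\{(x,y,z) \in \mathbb{Z}^3 \mid x+y+z = 0\}$; its reduction modulo $2$ is a faithful map $S_3 \to \GL_2(\GF_2) \cong S_3$, hence an automorphism of $S_3$, and this automorphism is inner since $\mathrm{Out}(S_3) = 1$. For $\kk = \GF_3$, I would invoke the classical faithful two-dimensional complex representation of the binary tetrahedral group $\SL_2(\GF_3)$: although its Schur index over $\mathbb{Q}$ equals $2$, the relevant quaternion algebra (ramified at $2$ and $\infty$) splits at $3$, so one may descend to $\GL_2(\mathbb{Q}_3)$ and choose an invariant $\mathbb{Z}_3$-lattice. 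Since $\SL_2(\GF_3)$ has a unique irreducible two-dimensional $\GF_3$-representation, the reduction matches the natural embedding up to conjugation. For $\kk = \GF_5$, I would use the classical two-dimensional representation of the binary icosahedral group $\SL_2(\GF_5)$ coming from the double cover $\mathrm{SU}(2) \twoheadrightarrow \mathrm{SO}(3)$; it is realized over $\mathbb{Z}[(1+\sqrt{5})/2]$, and completing at the unique prime above $5$ gives a representation into $\GL_2(\mathbb{Z}_5[\sqrt{5}])$ whose reduction is again the natural one up to conjugation.

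For part (ii), I would transcribe verbatim the argument of Proposition~\ref{PropSpec3}(ii), with $p := \Char \kk$ in place of $2$ and $T_\kk$ in place of $\mathbb{Z}_2$. Suppose $\iota$ were universal. If $\Char R \neq 0$, then $\Hom_{\cat}(R, T_\kk) = \emptyset$ because $T_\kk$ is torsion-free while some power of $p$ annihilates $1 \in R$, so $[\rho_0] \notin \iota_*(h_R(T_\kk))$, contradicting universality. If $\Char R = 0$, take $t := t_{12}^p \in G$: then $\rhobar(t) = I$, so $\rho_0(t) = I$, while $\iota_*(f)(t) = I + p\,e_{12} \neq I$ in $\GL_2(T_\kk)$ for every $f \in \Hom_{\cat}(R, T_\kk)$. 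Since strict equivalence preserves the identity, $[\rho_0] \notin \iota_*(h_R(T_\kk))$, once again contradicting universality.

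The chief difficulty lies in part (i): assembling the three explicit lifts and verifying that their reductions coincide (after conjugation) with the natural embeddings. The $\kk = \GF_5$ case is the most delicate, as one must manage the descent of the icosahedral representation to the ramified quadratic extension $\mathbb{Z}_5[\sqrt{5}]$; the $\kk = \GF_3$ case requires the quaternion-algebra splitting argument, and the $\kk = \GF_2$ case is essentially trivial. In each instance the verification ultimately reduces to classical facts about binary polyhedral groups.
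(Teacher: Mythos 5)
Your part (ii) reproduces the paper's argument (mirror the proof of Proposition~\ref{PropSpec3}(ii) with $p$ in place of $2$ and $T_\kk$ in place of $\mathbb{Z}_2$), and it is correct. Your part (i) takes a genuinely different route: instead of writing down explicit lifting matrices and verifying the Sunday/Coxeter presentations $\langle A, C \mid A^p = (A^{-1}C)^3 = C^2 \rangle$ as the paper does, you invoke classical two-dimensional representations of $S_3$ and of the binary tetrahedral and icosahedral groups. Your $\GF_2$ and $\GF_3$ arguments are sound (though to close them you also need the observation, which you omit, that the reduction is automatically faithful because the kernel of $\GL_2(T_\kk)\to\GL_2(\kk)$ is pro-$p$ while the relevant normal subgroups of $\SL_2(\kk)$ have order coprime to $p$, and irreducible because a faithful reducible $2$-dimensional representation would force $|\SL_2(\kk)|$ to divide the order of a Borel subgroup of $\GL_2(\kk)$, which fails). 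The trade-off is that the paper's route is entirely elementary and self-contained, while yours is shorter but leans on facts about Schur indices and quaternion algebras.

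There is, however, a concrete error in your $\GF_5$ case. The $2$-dimensional representation of the binary icosahedral group $\SL_2(\GF_5)$ is \emph{not} realizable over $\mathbb{Q}(\sqrt{5})$, hence not over $\mathbb{Z}[(1+\sqrt{5})/2]$: its Frobenius--Schur indicator is $-1$ (it is quaternionic, being an irreducible subgroup of $\mathrm{SU}(2)$ containing $-I$), so the local Schur index is $2$ at \emph{both} real places of $\mathbb{Q}(\sqrt{5})$, and therefore the global Schur index over $\mathbb{Q}(\sqrt{5})$ is $2$. Your sentence ``it is realized over $\mathbb{Z}[(1+\sqrt{5})/2]$, and completing at the unique prime above $5$ gives a representation into $\GL_2(\mathbb{Z}_5[\sqrt{5}])$'' therefore starts from a false premise. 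The conclusion is nonetheless true, and the repair is exactly the same local splitting argument you use for $\GF_3$: the endomorphism algebra is the quaternion algebra over $\mathbb{Q}(\sqrt{5})$ ramified precisely at the two archimedean places (and nowhere else, by reciprocity together with the fact that $2$ is inert in $\mathbb{Q}(\sqrt{5})$), so it splits at the unique prime above $5$; hence the representation descends to $\mathbb{Q}_5(\sqrt{5})$, and one takes an invariant lattice to land in $\GL_2(\mathbb{Z}_5[\sqrt{5}])$.
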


\begin{proof}

$(i)$ It is enough to prove the claim for $R=\kk$. One easily checks that $\SL_2(\GF_2) = \langle \tau \rangle \rtimes \langle  \eps \rangle$, where $\tau := \M{0}{1}{1}{1}$, $\eps := \M{0}{1}{1}{0}$, and that $\tau \mapsto \M{\phantom{-}0}{\phantom{-}1}{-1}{-1}$, $\eps \mapsto \M{0}{1}{1}{0}$ defines a lift of $\SL_2(\GF_2)$ to $\mathbb{Z}_2$. In case $p\in\{3,5\}$  it is known (\cite[\S 7.6]{Coxeter}) that $\SL_2(\GF_p)$ has presentation \[\langle A,B,C \ | \ A^p=B^3=C^2=ABC \rangle = \langle A, C \ | \ A^p = (A^{-1}C)^3 = C^2 \ \rangle,\] realized for example by the following choice of generators: $A := \M{-1}{\phantom{-}0}{-1}{-1}, C:=\M{\phantom{-}0}{1}{-1}{0}.$
Using this fact and defining $t\in \mathbb{Z}_3$  by $t^2=-2$, $t \equiv 2 (\hbox{mod } 3)$ it is easy to check that
\[ \MM{-1}{\phantom{-}0}{-1}{-1} \mapsto \frac{1}{2}\MM{1}{t+1}{t-1}{1} \quad , \quad \MM{\phantom{-}0}{1}{-1}{0} \mapsto \MM{\phantom{-}0}{1}{-1}{0} \]
extends to a lift of $\SL_2(\GF_3)$ to $\mathbb{Z}_3$. Similarily, defining $i,\varphi \in \mathbb{Z}_5[\sqrt{5}]$ by $i^2=-1$, $i \equiv 2 (\hbox{mod }5)$ and $\varphi := \frac{1+\sqrt{5}}{2}$ we have that
\[ \MM{-1}{\phantom{-}0}{-1}{-1} \mapsto \frac{1}{2}\MM{\varphi}{i(\varphi-1)+1}{i(\varphi-1)-1}{\varphi} \quad , \quad \MM{\phantom{-}0}{1}{-1}{0} \mapsto \MM{\phantom{-}0}{1}{-1}{0} \]
extends to a lift of $\SL_2(\GF_5)$ to $\GL_2(\mathbb{Z}_5[\sqrt{5}])$.

$(ii)$ The claim follows from $(i)$ by an argument analogous to the one used in Proposition~\ref{PropSpec3}.
\end{proof}

\begin{cor}
Combining Theorems~\ref{MainThm}, \ref{MainThmDim3}, \ref{MainThmDim2} and Propositions~\ref{PropSpec3}, \ref{PropSpec2} we obtain Theorem~\ref{IntrThm} stated in the introduction to this paper.
\end{cor}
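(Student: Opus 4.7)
The plan is a straightforward case analysis on the pair $(n,\kk)$, glueing together the results already proven in this section and the previous one. The key reduction is the first lemma of subsection~5.1: under Assumption~\ref{Ass}, the ring $R$ is the universal deformation ring of $\rhobar$ if and only if $\iota$ is a universal lift. This reformulation makes Theorem~\ref{IntrThm} directly comparable with the conclusions of Theorems~\ref{MainThm}, \ref{MainThmDim3}, \ref{MainThmDim2} and Propositions~\ref{PropSpec3}, \ref{PropSpec2}, each of which addresses the universality of $\iota$ for a different range of $(n,\kk)$.

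For the ``if'' direction I split according to $n$. If $n \geq 4$, Theorem~\ref{MainThm} gives that $\iota$ is universal, hence $R$ is the universal deformation ring. If $n = 3$ and $\kk \neq \GF_2$, the same conclusion follows from Theorem~\ref{MainThmDim3}. If $n = 2$ and $\kk \notin \{\GF_2,\GF_3,\GF_5\}$, it follows from Theorem~\ref{MainThmDim2}. Collectively these three cases exhaust the complement of the exceptional set $\{(2,\GF_2),(2,\GF_3),(2,\GF_5),(3,\GF_2)\}$, so in every such case $R$ is realized as the universal deformation ring of $\rhobar$.

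For the ``only if'' direction I observe that when $(n,\kk) = (3,\GF_2)$, part (ii) of Proposition~\ref{PropSpec3} asserts that no $R \in \Ob(\cat)$ makes $\iota$ universal; applying this to the specific $R$ appearing in Assumption~\ref{Ass} and reading back through the reduction lemma yields that $R$ is not the universal deformation ring of $\rhobar$. Exactly the same reasoning, invoking part (ii) of Proposition~\ref{PropSpec2}, settles the three remaining cases $(2,\GF_2)$, $(2,\GF_3)$, $(2,\GF_5)$. This completes the equivalence claimed in Theorem~\ref{IntrThm}.

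There is essentially no obstacle: the corollary is purely a bookkeeping step. The only point worth flagging is that Propositions~\ref{PropSpec3} and \ref{PropSpec2} are formulated as ``there is no $R \in \Ob(\cat)$ for which $\iota$ is universal'', which is a \emph{uniform} negative statement; one should note that this is at least as strong as the pointwise negation needed here, since our $R$ is a particular object of $\cat$.
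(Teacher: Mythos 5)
Your proposal is correct and is precisely the bookkeeping argument the paper intends: the paper gives no explicit proof of this corollary, and your case split according to $n$ and $\kk$, routed through the reduction lemma ($R$ is the universal deformation ring iff $\iota$ is universal), is exactly how the cited theorems and propositions assemble into Theorem~\ref{IntrThm}.
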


\section{Special cases}

It would be interesting to know what are the universal deformation rings of~$\rhobar$ in the cases not treated by Theorem~\ref{MainThmDim3} and Theorem~\ref{MainThmDim2}. We provide a complete answer in case $R=\kk$. 

\begin{prop}\label{PropDefo3}
The universal deformation ring of $\rhobar$ for $n=3$ and $R = \GF_2$ is $\mathbb{Z}_2$.
\end{prop}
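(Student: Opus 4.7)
The plan is to show $R_u \cong \mathbb{Z}_2$ by combining the explicit lift $\rho_0$ from Proposition~\ref{PropSpec3}(i) with the vanishing of the tangent space of $\Def_{\rhobar}$. Since $G := \SL_3(\GF_2)$ is finite it trivially satisfies $\Phi_p$, and Lemma~\ref{End1Lem} gives $\Ad(\rhobar)^G = \GF_2 I$, so the universal deformation ring $R_u$ exists by the criterion of Section~\ref{IntroExistCrit}; Proposition~\ref{PropPresentation} then realises $R_u$ as a quotient of $\Witt(\GF_2)[[X_1, \ldots, X_d]]$ with $d = \dim_{\GF_2} H^1(G, \Ad(\rhobar))$. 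The main step is to prove $d = 0$.

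Because $p = 2$ does not divide $n = 3$, the map $a \mapsto aI$ is a $\GF_2 G$-section of the trace $M_3(\GF_2) \to \GF_2$, giving a decomposition $\Ad(\rhobar) = \mathfrak{sl}_3(\GF_2) \oplus \GF_2 I$ of $\GF_2 G$-modules, which induces a corresponding decomposition of $H^1$. The scalar summand contributes $H^1(G, \GF_2) = \Hom(G^{ab}, \GF_2) = 0$, since $G \cong \PSL_2(\GF_7)$ is simple (hence perfect). For the $\mathfrak{sl}_3$-summand I would identify $\mathfrak{sl}_3$ with the Steinberg module of $G$: both are irreducible $\GF_2 G$-modules of dimension $8 = |G|_2$ with highest weight $\omega_1 + \omega_2$ (the highest root of $A_2$, coinciding with the Steinberg weight $(p-1)\rho$ for $p = 2$). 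Since the Steinberg module of a finite group of Lie type in its defining characteristic is projective as a group-algebra module, it is cohomologically trivial, so $H^1(G, \mathfrak{sl}_3) = 0$ and $d = 0$.

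Consequently $R_u$ is a quotient of $\Witt(\GF_2) = \mathbb{Z}_2$. On the other hand, the lift $\rho_0 \colon G \to \GL_3(\mathbb{Z}_2)$ supplied by Proposition~\ref{PropSpec3}(i) yields, by universality of $R_u$, a $\cat$-morphism $R_u \to \mathbb{Z}_2$ which is automatically surjective because every $\cat$-morphism is a $\Witt(\GF_2)$-algebra map. Combining the two surjections forces $R_u \cong \mathbb{Z}_2$.

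The only non-formal step is the tangent space computation, and within it the identification $\mathfrak{sl}_3 \cong \mathrm{St}$. As an elementary substitute avoiding algebraic group theory one may instead restrict $H^1$ along the Sylow $2$-subgroup $P \cong D_4$ of upper unitriangular matrices in $G$ (the restriction map being injective since $[G:P] = 21$ is coprime to $2$) and verify by hand that $\mathfrak{sl}_3|_P$ is the regular $\GF_2 P$-module, so $H^1(P, \mathfrak{sl}_3|_P) = 0$.
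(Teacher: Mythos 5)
Your proof is correct but takes a genuinely different route from the paper's. The paper revisits its own computational machinery: it reduces the tangent-space computation to the case $S = \GF_2[\eps]$, $R=\GF_2$ of the inductive step in the proof of Theorem~\ref{MainThm}, and supplies ad hoc matrix arguments for Claims~\ref{claim-commConclusion} and~\ref{claim-[]} tailored to $n=3$, $\Char S = 2$ (for instance, expanding $\rho(t_{ab})^2 = I$ in characteristic $2$ to get $M(a,a)+M(b,b)=0$, then combining with $\tr M = 0$ to conclude $M(c,c)=0$). You instead compute $H^1(G,\Ad(\rhobar))$ directly from the $\GF_2 G$-decomposition $\Ad(\rhobar) = \mathfrak{sl}_3(\GF_2)\oplus \GF_2 I$ (valid since $2\nmid 3$), use the perfectness of $G\cong\PSL_2(\GF_7)$ to kill the scalar summand, and identify $\mathfrak{sl}_3(\GF_2)$ with the Steinberg module---an $8$-dimensional irreducible of highest weight $\omega_1+\omega_2 = (p-1)\rho$ for $p=2$---whose projectivity kills the rest. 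Your argument is cleaner and more conceptual, but leans on facts from the modular representation theory of finite groups of Lie type (the adjoint/Steinberg identification and projectivity of Steinberg) that the paper deliberately sidesteps to stay elementary and self-contained; your proposed fallback---restricting along the Sylow $2$-subgroup $P\cong D_4$ and checking directly that $\mathfrak{sl}_3(\GF_2)|_P$ is the regular $\GF_2 P$-module---is a reasonable middle ground and is indeed verifiable by hand. The final step (universality of the lift to $\mathbb{Z}_2$ from Proposition~\ref{PropSpec3}(i) forcing $R_u\cong\mathbb{Z}_2$ once one knows $R_u$ is a quotient of $\Witt(\GF_2)$) matches the paper's logic exactly.
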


\begin{proof}
It is sufficient to show that the versal deformation ring $R_v$ of $\rhobar$ is a quotient of $\mathbb{Z}_2$. Indeed, by Lemma~\ref{PropSpec3} it can not be a proper quotient of $\mathbb{Z}_2$, so it will follow that $R_v =\mathbb{Z}_2$
(which also implies that $R_v$ is universal).

By Proposition~\ref{PropPresentation} we need to check that the tangent space to $\Def_{\wbar{\rho}}$ is zero dimensional which is equivalent to checking that every deformation of $\rhobar$ to $S=\GF_2[\eps]$ is induced by a $\cat$-morphism $R\rightarrow S$. This can be done by a modification of the argument used in the inductive step of the proof of Theorem~\ref{MainThm}, in the special case $S=\GF_2[\eps]$, $R=\GF_2$. As in the proof of Theorem~\ref{MainThmDim3}, we only have to provide an argument for Claims~\ref{claim-commConclusion} and \ref{claim-[]}.

The proof of Claim~\ref{claim-commConclusion} given in Theorem~\ref{MainThmDim3} holds true also in this case. As for Claim~\ref{claim-[]}, suppose $\mf{I} = \{a,b,c\}$ and consider $r \in R$. We need to check that $M_{ab}^r(c,c)=0$. Since $R =\GF_2$, we only have to consider the case $r=1$, in which we will write $t_{ab}$ for $t_{ab}^1$ and $M$ for $M_{ab}^1$. Note that $t_{ab}$ is of order $2$ and so does its lift $\rho(t_{ab}) = t_{ab} + M$. Using the fact that $\Char S = 2$ we compute: \[ I = (t_{ab}+M)^2 = I + t_{ab}M + Mt_{ab} = I + e_{ab}M + Me_{ab}\]
In particular, comparison of $(a,b)$-entries yields: $M(a,a) + M(b,b)=0$. Since $M(a,a) + M(b,b) + M(c,c) = \tr M = 0$ by Claim~\ref{claim-tr}, we conclude that $M_{ab}^1(c,c)=0$.
\end{proof}

\begin{lem} \label{LemmaChebyshev}
Let polynomials $f_n \in \mathbb{Z}[X]$ be defined recursively by $f_0 = 0$, $f_1 = 1$, $f_{n+1} = Xf_n - f_{n-1}$. Consider a commutative ring $R$ and a matrix $M \in M_2(R)$ such that $\det M = 1$ and at least one of its off-diagonal entries is not a zero divisor. If $n=2k+1$ is an odd positive integer then $M^n=-I$ holds if and only if $t := \tr M$ is a root of the polynomial $f_{k+1}-f_k$. 
\end{lem}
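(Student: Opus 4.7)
The plan is to convert the equation $M^{2k+1}=-I$ into a single factored relation $(f_{k+1}(t)-f_k(t))(M+I)=0$, after which the off-diagonal non-zero-divisor hypothesis immediately reads off $f_{k+1}(t)=f_k(t)$. The polynomials $f_n$ are forced upon us by Cayley--Hamilton: since $\det M=1$ and $\tr M=t$, we have $M^2=tM-I$, so an easy induction gives
\[ M^{j}=f_j(t)M-f_{j-1}(t)I \qquad (j\geq 0), \]
once one extends the sequence by $f_{-1}:=-1$, consistent with the defining recurrence.

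The inverse $M^{-1}=tI-M$ (again from Cayley--Hamilton) also has trace $t$ and determinant $1$, so applying the same identity with $M^{-1}$ in place of $M$ yields
\[ M^{-k}=f_k(t)M^{-1}-f_{k-1}(t)I=-f_k(t)M+f_{k+1}(t)I \qquad (k\geq 0). \]
Adding the expressions for $M^{k+1}$ and $M^{-k}$ produces the key factorization
\[ M^{k+1}+M^{-k}=(f_{k+1}(t)-f_k(t))(M+I). \]

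Now $M^{2k+1}=-I$ is equivalent (multiply by $M^{-k}$) to $M^{k+1}+M^{-k}=0$, so the lemma reduces to showing that $(f_{k+1}(t)-f_k(t))(M+I)=0$ in $M_2(R)$ is equivalent to $f_{k+1}(t)-f_k(t)=0$ in $R$. The reverse implication is immediate. For the forward one, note that the off-diagonal entries of $M+I$ coincide with those of $M$, so inspecting the off-diagonal entry that is not a zero divisor forces $f_{k+1}(t)-f_k(t)=0$ at once. The main delicacy is spotting the symmetric formula for $M^{-k}$ so that the sum $M^{k+1}+M^{-k}$ visibly factors through $M+I$; once this is in hand the rest of the proof is a single-entry inspection.
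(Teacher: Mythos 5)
Your proof is correct, and it takes a genuinely different route from the paper's. The paper also starts from Cayley--Hamilton and the identity $M^n = f_n(t)M - f_{n-1}(t)I$, then uses the non-zero-divisor hypothesis to translate $M^n=-I$ into the two scalar conditions $f_n(t)=0$ and $f_{n-1}(t)=1$; the heart of that argument is a purely polynomial fact, proved by an induction on $l$, that the ideal $(f_n,\, f_{n-1}-1)$ of $\mathbb{Z}[X]$ equals $(f_{n-l}-f_l,\, f_{n-1-l}-f_{l+1})$ for $0\le l\le n-1$, which at $l=k$ collapses to $(f_{k+1}-f_k)$. You instead exploit the symmetry of the Chebyshev-type recursion at the matrix level: since $M^{-1}=tI-M$ has the same trace and determinant, you get $M^{-k}=-f_k(t)M+f_{k+1}(t)I$, and adding this to $M^{k+1}=f_{k+1}(t)M-f_k(t)I$ gives the single factored relation $M^{k+1}+M^{-k}=(f_{k+1}(t)-f_k(t))(M+I)$, after which the non-zero-divisor hypothesis is invoked just once, on one off-diagonal entry. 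Your version buys a more conceptual, one-shot factorization that makes the appearance of $f_{k+1}-f_k$ transparent and avoids the separate ideal-theoretic induction; the paper's version keeps the matrix computation minimal and isolates the combinatorial identity about the polynomials $f_n$ as a self-contained lemma-within-a-lemma. Both are correct; all the intermediate claims in your write-up (the induction $M^j=f_j(t)M-f_{j-1}(t)I$, the formula for $M^{-k}$, the equivalence $M^{2k+1}=-I\iff M^{k+1}+M^{-k}=0$, and the final entrywise reading) check out.
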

\begin{proof}
By Cayley-Hamilton we have $X^2 = tX - 1$ and it is easy to check that $\forall n\geq 1: \ X^n = f_n(t) X - f_{n-1}(t) I$. It follows that $X^n = -I$ if and only if $f_n(t) = 0$ and $f_{n-1}(t) = 1$. If $I_n =(f_n, f_{n-1}-1)$ is the ideal of $\mathbb{Z}[X]$ generated by $f_n$ and $f_{n-1}-1$, then one easily proves by induction on $l$ that $\forall l \in \{0,\ldots, n-1\}: \ I_n = ( f_{n-l}-f_{l} , f_{n-1-l}- f_{l+1})$. In particular, for $l=k$ we obtain $I_n = (f_{k+1}-f_{k})$. 
\end{proof}

\begin{prop} \label{PropDefo2}
Assume $n=2$ and $\kk \in \{ \GF_2, \GF_3, \GF_5 \}$. The universal deformation rings of $\rhobar$ for $R = \kk$ are, respectively: $\mathbb{Z}_2$, $\mathbb{Z}_3[X]/(X^3-1)$ and $\mathbb{Z}_5[\sqrt{5}]$. 
\end{prop}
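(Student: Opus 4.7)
The plan is to handle the three cases with three different techniques tailored to the special structure of $\SL_2(\kk)$.

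For $\kk = \GF_2$, I would mimic the argument of Proposition~\ref{PropDefo3} and show that the tangent space $t_D = H^1(\SL_2(\GF_2), \Ad(\rhobar))$ vanishes. Since $\SL_2(\GF_2) \cong S_3 = \langle \tau \rangle \rtimes \langle \eps \rangle$ with $|\tau|=3$ coprime to $p=2$, the Hochschild--Serre spectral sequence for the normal subgroup $\langle \tau \rangle$ collapses, yielding $H^1(S_3, \Ad(\rhobar)) \cong H^1(\mathbb{Z}/2, \Ad(\rhobar)^{\langle\tau\rangle})$. The invariants form the centralizer of $\rhobar(\tau)$ in $M_2(\GF_2)$, isomorphic to $\GF_4 = \GF_2[\rhobar(\tau)]$, on which $\rhobar(\eps)$ acts by Frobenius (because $\eps \tau \eps = \tau^{-1}$). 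A direct additive Hilbert~90 computation then gives $H^1(\mathbb{Z}/2, \GF_4) = 0$. By Proposition~\ref{PropPresentation}, $R_v$ is a quotient of $\mathbb{Z}_2 = \Witt(\GF_2)$; combined with the lift to $\mathbb{Z}_2$ from Proposition~\ref{PropSpec2}(i), this forces $R_v = \mathbb{Z}_2$.

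For $\kk = \GF_3$, I would apply Lemma~\ref{Lemma_sl-gl} with $N := [\SL_2(\GF_3), \SL_2(\GF_3)] \cong Q_8$ the commutator subgroup. The restriction $\rhobar|_{Q_8}$ is the unique $2$-dimensional irreducible representation of $Q_8$, and since $\gcd(8,3) = 1$ it remains absolutely irreducible on extension of scalars, so $\Ad(\rhobar)^{Q_8} = \GF_3 \cdot I$. As $|Q_8| = 8$ is coprime to $p = 3$, Lemma~\ref{ProjLemma} identifies the universal deformation ring of $\rhobar|_{Q_8}$ with $\mathbb{Z}_3$; the lift of Proposition~\ref{PropSpec2}(i), restricted to $Q_8$, serves as a universal lift of $\rhobar|_{Q_8}$ that extends to $G$. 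Since $G/N \cong \mathbb{Z}/3$ has pro-$3$ abelianization $\Gamma = \mathbb{Z}/3$ (finitely generated over $\mathbb{Z}_3$), Lemma~\ref{Lemma_sl-gl}(ii) yields $R_v = \mathbb{Z}_3[[\mathbb{Z}/3]] \cong \mathbb{Z}_3[X]/(X^3 - 1)$.

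For $\kk = \GF_5$, the group $\SL_2(\GF_5)$ is perfect so Lemma~\ref{Lemma_sl-gl} is unavailable; instead I would exploit the presentation $\langle A, C \mid A^5 = C^2 = (A^{-1}C)^3 \rangle$ together with Lemma~\ref{LemmaChebyshev}. For a lift $\rho$, perfectness gives $\det \rho \equiv 1$; set $T := \rho(A)$, $E := \rho(C)$. Lemma~\ref{CentrLemma} forces the common central value $T^5 = E^2 = (T^{-1}E)^3$ to be a scalar $zI$ with $z \equiv -1$. Cayley--Hamilton applied to the non-scalar $E$ with $\det E = 1$, $E^2 = zI$ pins down $\tr E = 0$ and $z = -1$, and Lemma~\ref{LemmaChebyshev} applied to $T^5 = -I$ (note that the off-diagonal entries of $T$ reduce to units, hence are non-zero-divisors) forces $\tr T$ to be a root of $f_3 - f_2 = X^2 - X - 1$ reducing to $\tr A = 3 \mod \mf{m}_R$. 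Since $X^2 - X - 1 \equiv (X - 3)^2 \mod 5$, its roots lie in $\mathbb{Z}_5[\sqrt 5]$ and nowhere smaller, so the value of $\tr T$ canonically determines a $\cat$-morphism $\mathbb{Z}_5[\sqrt 5] \to R$. One then has to show that $(T, E)$ is determined up to strict equivalence by this datum, matching the explicit lift $\rho_0$ of Proposition~\ref{PropSpec2}(i), so that $(\rho_0)_* : h_{\mathbb{Z}_5[\sqrt 5]} \to \Def_{\rhobar}$ is an isomorphism.

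The main obstacle will be the final rigidity step in the $\GF_5$ case. After normalizing $T$ by conjugation to a standard form determined by $\tr T$, one must show that the constraints $E^2 = -I$, $\det E = 1$, $\tr E = 0$, together with $\tr(T^{-1}E) = 1$ (obtained by applying Lemma~\ref{LemmaChebyshev} to $(T^{-1}E)^3 = -I$, where $f_2 - f_1 = X - 1$), determine $E$ up to the residual strict-equivalence freedom given by conjugation by the centralizer of $T$. Tracking this centralizer action carefully and checking that it exactly absorbs the remaining ambiguity is the delicate point that makes the $\GF_5$ case notably more involved than the other two.
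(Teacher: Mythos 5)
Your $\GF_3$ case matches the paper's argument essentially verbatim: both decompose $\SL_2(\GF_3)$ as $Q_8\rtimes C_3$, invoke Lemma~\ref{Lemma_sl-gl} with $N=Q_8$, apply Lemma~\ref{ProjLemma} to get $\mathbb{Z}_3$ for $\rhobar|_{Q_8}$, and use the lift of Proposition~\ref{PropSpec2}(i) as the extendable universal lift. Your $\GF_2$ case is a valid alternative route: the paper checks that $V_{\rhobar}$ is projective over $\GF_2\,\SL_2(\GF_2)$ by restricting to the $2$-Sylow subgroup $C_2$, where it is free, and directly invokes Lemma~\ref{ProjLemma}; you instead compute $H^1(S_3,\Ad(\rhobar))=0$ via Hochschild--Serre (the prime-to-$p$ normal subgroup $\langle\tau\rangle$ kills higher cohomology) and additive Hilbert~90 for $\GF_4/\GF_2$, then combine Proposition~\ref{PropPresentation} with the $\mathbb{Z}_2$-lift. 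Both are correct; the paper's is a little more compact because Lemma~\ref{ProjLemma} packages both the vanishing of $t_D$ and the lifting at once, but your cohomological computation is sound. (A minor remark on $\GF_3$: deducing $\Ad(\rhobar)^{Q_8}=\GF_3 I$ purely from $\gcd(8,3)=1$ is a bit quick --- one needs that the $2$-dimensional representation of $Q_8$ is \emph{absolutely} irreducible over $\GF_3$, which is true but deserves a one-line check.)

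The genuine gap is in the $\GF_5$ case, and you have flagged it yourself: the ``final rigidity step'' is where all the work lies, and it is not carried out. You propose normalizing $T=\rho(A)$ and then arguing that the centralizer of $T$ absorbs the remaining freedom in $E=\rho(C)$, but you give no gauge-fixing condition together with a uniqueness proof. The paper does the dual normalization and closes this concretely: it first sets $\rho(C)=C$ (possible by Lemma~\ref{ProjLemma} applied to the order-$4$ subgroup $\langle C\rangle$), observes that the residual strict-equivalence freedom is conjugation by matrices $\M{u}{v}{-v}{u}\in I+M_2(\mf{m}_S)$, and then shows by an explicit computation together with Hensel's lemma that there is exactly one such conjugate making the diagonal entries of $\rho(A)$ equal. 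Only after that normalization do the trace constraints from Lemma~\ref{LemmaChebyshev} and $\det\rho(A)=1$ pin down $\rho(A)$ uniquely and give the bijection between $\Def_{\rhobar}(S)$ and $\Hom_{\cat}(\mathbb{Z}_5[\sqrt{5}],S)$. Without an analogue of this Hensel-type uniqueness argument, your outline does not establish that the trace datum determines the deformation, so as written the $\GF_5$ case is incomplete.
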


\begin{proof}
For $G = \SL_2(\GF_2)$ we observe that the $\GF_2 G$-module $V_{\rhobar}$ is projective. Indeed, for a field $\kk$ of characteristic $p$ and a finite group $G$ with $p$-Sylow subgroup $S$, a $\kk G$-module $V$ is projective if and only if $V$ is projective as a $\kk S$-module (\cite[p. 66, Corollary 3]{Alperin}). In this case $S \cong \langle\M{0}{1}{1}{0}\rangle\cong C_2$ (cyclic group of order 2) and $V_{\rhobar |_S} \cong \GF_2[C_2]$ is even a free $\GF_2 S$-module. The claim follows now from Lemma~\ref{ProjLemma}. 

\medskip

The case $G = \SL_2(\GF_3)$ can be approached via Lemma~\ref{Lemma_sl-gl}. It follows from the discussion in \cite[\S 7.2, \S 7.6]{Coxeter} that $G \cong G' \rtimes C_3$, where $G' = \left\langle \M{1}{1}{1}{2}, \M{0}{1}{2}{0} \right\rangle$ is isomorphic to the quaternion group of order 8. Since $G'$ has order coprime to $p=3$, it follows from Lemma~\ref{ProjLemma} that $\mathbb{Z}_3$ is the universal deformation ring for $\rhobar|_{G'}$. Proposition~\ref{PropSpec2} shows that there exists a universal lift of $\rhobar|_{G'}$ that may be extended to $G$. Moreover, it is easy to check that $\Ad(\rhobar)^{G'} = \kk I$. Thus, given that $G/G' \cong C_3$, the universal deformation ring of $\rhobar$ is $\mathbb{Z}_3[C_3] \cong \mathbb{Z}_3[X]/(X^3-1)$. 

\medskip

In case $R = \GF_5$ we will simply check that the lift described in Lemma~\ref{PropSpec2} is universal.  Consider $S \in \Ob(\cat)$, $\xi \in \Def_{\rhobar}(S)$ and let $A := \M{-1}{\phantom{-}0}{-1}{-1}$, $C:=\M{\phantom{-}0}{1}{-1}{0} \in \SL_2(R)$; we moreover identify $C$ with $\M{\phantom{-}0}{1}{-1}{0} \in \SL_2(S)$. Since $H:=\langle C \rangle$ is of order $4$, it follows from Lemma~\ref{ProjLemma} that there is precisely one deformation of $\rhobar|_H$  to $S$. Hence, $\xi$ has a representative $\rho\in \xi$ satisfying $\rho(C) = C$. We claim that there is precisely one $\rho \in \xi$ satisfying this condition and such that the diagonal entries of $\rho(A)$ are equal.  Indeed, if $\rho(A) = \M{a}{b}{c}{d}$ and $X \in I+M_n(\mf{m}_S)$ is a matrix commuting with $\M{0}{1}{-1}{0}$ then there exist $u,v \in S$ such that $X = \M{u}{v}{-v}{u}$ and writing $t := v/u \in \maxId{S}$ we obtain
\[ \MM{u}{v}{-u}{v} \MM{a}{b}{c}{d} \MM{u}{v}{-v}{u}^{-1} = \frac{1}{1+t^2}\MM{a+ct+bt+dt^2}{b+dt-at-ct^2}{c-at+dt-bt^2}{d-bt-ct+at^2}.\]
The equation $a+ct+bt+dt^2 = d-bt-ct+at^2$ is equivalent to $t^2 (d-a) + 2t(b+c) + (a-d) = 0$ and has precisely one solution $t \in \mf{m}_S$, due to Hensel's lemma.

Since $A$ and $C$ generate $G$, a lift $\rho$ is uniquely determined by $\rho(A)$ and $\rho(C)$. Note that $\det \rho(A) = \det \rho(C) =1$ due to Lemma~\ref{LemmaCommutator}. Using all the above observations and the presentation of $\SL_2(\GF_5)$ introduced in the proof of Lemma~\ref{PropSpec2}, we see that deformations of $\rhobar$ to $S$ correspond bijectively with matrices $M = \M{a}{b}{c}{a} \in M_n(S)$ such that $\M{a}{b}{c}{a} \equiv \M{-1}{0}{-1}{-1} \hbox{ mod } \maxId{S}$, $\det M=1$ and $M^5 = \left(M^{-1}C\right)^3 =-I$. 

 By Lemma~\ref{LemmaChebyshev}, the last condition is equivalent to $\tr M = 2a$ being a root of $f_3-f_2=X^2-X-1$ and $\tr (M^{-1}C)=b-c$ being a root of $f_2-f_1 = X-1$. If $(2a)^2=2a+1$ then solving the quadratic equation $b(b-1)+1-a^2 = 1-\det M =0$ we obtain that $b = \frac{1-i(1-2a)}{2}$ with $i^2=-1$. We conclude that the full set of conditions imposed on $a,b,c$ is as follows: $a =\frac{\varphi}{2}$, $b = \frac{1-i(1-\varphi)}{2}$, $c=b-1$, where $\varphi^2=\varphi+1$, $\varphi \equiv -2  \hbox{ mod } \maxId{S}$ and $i^2=-1$, $i \equiv 2  \ \hbox{mod } \maxId{S}$. It follows that every deformation of $\rhobar$ to $S$ is induced by a morphism $\mathbb{Z}_5[\sqrt{5}] \rightarrow S$ applied to the universal lift defined in the proof of Lemma~\ref{PropSpec2}.
\end{proof}

\begin{rmk}
The above results obtained for $n=2$ seem to be not entirely new. For example, Rainone in \cite{Rainone} has considered the case $\kk=\GF_2$ and Mazur mentions the case $\kk=\GF_5$ in \cite[ \S 1.9]{Mazur2} though without giving a proof. Also Bleher and Chinburg obtained analogous results for an algebraically closed field in \cite{BleherChinburg3}. However, there does not seem to be an easy and complete treatment of all the cases in the literature.
\end{rmk}

\begin{rmk} It is worth noting that even though in case $\kk=\GF_2$ we have obtained the same universal deformation ring for $n=2$ and $n=3$, the $\kk G$-module $V_{\rhobar}$ is not projective when $n=3$. Indeed, it is known (\cite[p. 33, Corollary 7]{Alperin}) that if a $\kk G$-module $V$ is projective then the order of the $p$-Sylow subgroup $S$ of $G$ divides $\dim_\kk V$. Here $|S|=8$ and $\dim_\kk V_{\rhobar}=3$. 
\end{rmk}

\section{The general linear group}

It is a very natural question to ask what results would be obtained in the preceding sections if we considered the general linear group instead of the special linear one. Let us note for example that Rainone studied in \cite{Rainone} the deformations of the identity map $\GL_2(\GF_p) \rightarrow \GL_2(\GF_p)$ and obtained $\GF_p$ as the universal deformation ring for all $p>3$. 

Let $R \in \Ob(\cat)$ and $\rhobar : \GL_n(R) \rightarrow \GL_n(\kk)$ be induced by the reduction $R \twoheadrightarrow \kk$. Let us consider generally 
\[ \mf{F} \ := \ \{ G \leq \GL_n(R) \ | \ G \hbox{ closed and } \SL_n(R) \leq G \} \]
Note that the determinant map gives a bijective correspondence between $\mf{F}$ and closed subgroups of $R^\times$. In particular, every $G \in \mf{F}$ is a normal subgroup of $\GL_n(R)$. Moreover, since $\GL_n(R) = \varprojlim \GL_n(R/\maxId{R}^k)$ is profinite, so are all elements of $\mf{F}$. 

If the finiteness condition $\Phi_p$ is safisfied for $G \in \mf{F}$ then the universal deformation ring of $\rhobar|_G$ exists by Lemma~\ref{End1Lem} and the criterion of section \ref{IntroExistCrit}. However, in general $\Phi_p$ need not hold. 

\begin{exm}
Let $n\geq 3$, $R = \GF_p[[X]]$ and $G=\GL_n(R)$. One may check that $R_1^\times \cong \mathbb{Z}_p^\mathbb{N}$. Using the determinant map and isomorphism $R^\times \cong \mu_R \oplus R_1^\times$ we obtain that $\CHom(G, \mathbb{Z}/p\mathbb{Z})$ is infinite. 
\end{exm}

Consequently, $\Def_{\rhobar|_G}$ may be not representable over $\cat$. If it is, we will denote an object representing it by $R_u(G)$. 

\begin{prop}\label{PropH-G}
Let $G,H \in \mf{F}$ be such that $H \subseteq G$ and $\Def_{\rhobar|_H}$ is representable. If the pro-$p$ completion $(G/H)^p$ is topologically finitely generated then $\Def_{\rhobar|_G}$ is represented by $R_u(H)[[ (G/H)^p ]]$. Otherwise it is not representable over $\cat$. 
\end{prop}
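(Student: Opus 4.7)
The plan is to invoke Lemma~\ref{Lemma_sl-gl} with $N := H$; its conclusion (ii) matches the statement of the proposition once one checks that $G/H$ is abelian. I would first dispatch the routine hypotheses. The subgroup $H$ is closed and normal in $G$: via the determinant, $\mf{F}$ corresponds bijectively with closed subgroups of the abelian group $R^\times$, so every element of $\mf{F}$ is automatically normal in $\GL_n(R) \supseteq G$. The identity $\Ad(\rhobar)^H = \kk I$ follows from Lemma~\ref{End1Lem}, since $\SL_n(R) \leq H$ forces $\rhobar(H)$ to contain every $t_{ab}^1 \in \SL_n(\kk)$. Representability of $\Def_{\rhobar|_H}$ by $R_u(H)$ is the standing assumption. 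Finally, $G/H$ embeds into $\GL_n(R)/H \cong R^\times/\det(H)$, hence is abelian; therefore $(G/H)^{ab,p} = (G/H)^p$ and the ring $R[[\Gamma]]$ of the lemma coincides with $R_u(H)[[(G/H)^p]]$ of the proposition.

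The substantive hypothesis to verify is the existence of a universal lift $\rho_H : H \to \GL_n(R_u(H))$ that extends to a continuous lift $\varphi : G \to \GL_n(R_u(H))$ of $\rhobar$. The inclusion $\iota|_H$ is itself a lift of $\rhobar|_H$, so by universality it corresponds to a morphism $f : R_u(H) \to R$ with $\iota|_H \sim \GL_n(f) \circ \rho_H$. Since $\iota(t_{ab}^r) = t_{ab}^r$, the image of $f$ contains all of $R$, so $f$ is surjective; after lifting the conjugating element entry-wise from $I + M_n(\maxId{R})$ to $I + M_n(\maxId{R_u(H)})$, I may replace $\rho_H$ by a strictly equivalent (still universal) lift for which $\iota|_H = \GL_n(f) \rho_H$ holds on the nose. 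For each $g \in G$ the conjugation $c_g(h) := ghg^{-1}$ is an automorphism of $H$, and the two representations $\rho_H \circ c_g$ and $\Ad(\tilde X_g) \rho_H$, where $\tilde X_g \in \GL_n(R_u(H))$ is any set-theoretic lift of $\rhobar(g)$, are both lifts of the twisted representation $\Ad(\rhobar(g)) \rhobar|_H$, whose universal deformation ring is again $R_u(H)$; a strict equivalence between them produces $K_g \in I + M_n(\maxId{R_u(H)})$, and setting $\varphi(g) := K_g \tilde X_g$ yields the candidate extension, which by construction reduces to $\rhobar(g)$ and satisfies $\varphi(g) \rho_H(h) \varphi(g)^{-1} = \rho_H(ghg^{-1})$ for every $h \in H$.

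The main obstacle is promoting the set-valued assignment $g \mapsto \varphi(g)$ into a continuous group homomorphism. By Lemma~\ref{CentrLemma}, the centralizer of $\rho_H(H)$ in $\GL_n(R_u(H))$ equals $R_u(H)^\times I$, so the ambiguity in $\varphi(g)$ is precisely multiplication by a scalar in $1 + \maxId{R_u(H)}$; the resulting obstruction is a $2$-cocycle $G \times G \to R_u(H)^\times I$ that factors through the abelian quotient $G/H$, and I would verify its vanishing by exploiting the commutativity of $G/H$ together with the profinite topology, reducing modulo successive powers of $\maxId{R_u(H)}$ to a computation in a finite quotient of $G$. Once $\varphi$ is in hand, Lemma~\ref{Lemma_sl-gl}(ii) applied verbatim produces both branches of the proposition: if $(G/H)^p$ is topologically finitely generated as a $\mathbb{Z}_p$-module then $R_u(H)[[(G/H)^p]]$ represents $\Def_{\rhobar|_G}$, and otherwise $\Def_{\rhobar|_G}$ fails to be representable over $\cat$.
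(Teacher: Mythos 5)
Your overall strategy coincides with the paper's: Proposition~\ref{PropH-G} is obtained by invoking Lemma~\ref{Lemma_sl-gl} with $N:=H$ and observing that $G/H$ is abelian, so that $(G/H)^{ab,p}=(G/H)^p$ and ``topologically finitely generated'' agrees with ``finitely generated as a $\mathbb{Z}_p$-module''. Your checks of normality of $H$, of $\Ad(\rhobar)^H=\kk I$ via Lemma~\ref{End1Lem}, and of the identification $\Gamma=(G/H)^p$ are all correct and match the spirit of the paper's one-sentence proof.

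Where you diverge, and where a genuine gap appears, is in the verification that some universal lift of $\rhobar|_H$ extends to a lift $\varphi:G\to\GL_n(R_u(H))$. The paper does not address this hypothesis explicitly; implicitly it is relying on the fact that in every application (Corollary~\ref{CorollaryGL}) one takes $H=\SL_n(R)$, in which case $R_u(H)=R$ by the main theorems and the universal lift is the inclusion $\iota|_{\SL_n(R)}$, so the extension to any $G\in\mf{F}$ is simply $\iota|_G$ and there is nothing to prove. You instead attempt to construct the extension for arbitrary $H$, and your construction is not complete. Two specific problems: first, from the universality of $\rho_H\circ c_g$ and of $\Ad(\tilde X_g)\rho_H$ you may only conclude that they are related by a strict equivalence composed with some automorphism $\alpha$ of $R_u(H)$; the constraint coming from $\GL_n(f)$ gives $f\alpha=f$, and since $f$ is surjective but generally not injective this does not force $\alpha=\mathrm{id}$, so the existence of your element $K_g$ is not established. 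Second, even granting the $K_g$'s, the resulting obstruction class lies in $H^2\bigl(G/H,\,1+\maxId{R_u(H)}\bigr)$ with trivial action, and ``commutativity of $G/H$ together with the profinite topology'' does not make $H^2$ vanish -- for instance $H^2(\mathbb{Z}/p,A)\cong A/pA$ for a trivial module, which is typically nonzero here. You announce a plan to verify vanishing by reducing modulo powers of the maximal ideal, but no actual argument is given, and it is not clear one can be given at this level of generality. To repair the proof along the paper's intended lines, one should either restrict to the situation actually used ($H=\SL_n(R)$ with $\iota|_H$ universal, so that $\varphi=\iota|_G$ works immediately), or add the existence of the extension $\varphi$ as an explicit hypothesis.
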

\begin{proof}
It is an immediate consequence of Lemma~\ref{Lemma_sl-gl} since for an abelian pro-$p$ group being topologically finitely generated is equivalent to being finitely generated as a $\mathbb{Z}_p$-module.
\end{proof}

We conclude that the results regarding $\GL_n(\kk)$ generalize much better and in a more natural way to the group $\mu L_n(R)$ defined below than to $\GL_n(R)$:

\begin{cor}\label{CorollaryGL} Suppose $(n, \kk) \not \in \{  (2, \GF_2) , (2, \GF_3) , (2, \GF_5),  (3,\GF_2)  \}$.
\begin{enumerate}[(i)]
\item Either $R_u(\GL_n(R)) \cong R[[  R^\times_1  ]]$ or $\Def_{\rhobar}$ is not representable over $\cat$. In particular, $R$ represents $\Def_{\rhobar}$ if and only if $R = \kk$. 
\item Let $\mu L_n(R) := \{ A \in \GL_n(R) \ | \ \det A \in \mu_R \}$. The set $\mf{G}$ of all $G \in \mf{F}$ for which $\Def_{\rhobar|_G}$ is represented by $R$ coincides with the set $\{ G \in \mf{F} \ | \ G \leq \mu L_n(R)\}$.
\end{enumerate}
\end{cor}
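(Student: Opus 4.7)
The plan is to deduce both parts from Proposition~\ref{PropH-G} applied with $H = \SL_n(R)$. The hypothesis on $(n,\kk)$ together with Theorems~\ref{MainThm}, \ref{MainThmDim3}, and~\ref{MainThmDim2} ensures that $R$ represents $\Def_{\rhobar|_{\SL_n(R)}}$ with universal lift the natural inclusion $\SL_n(R)\hookrightarrow \GL_n(R)$; for any $G\in\mf{F}$ this lift obviously extends to the inclusion $G\hookrightarrow \GL_n(R)$, so the hypotheses of Lemma~\ref{Lemma_sl-gl} (and thus Proposition~\ref{PropH-G}) are met. Consequently, for every $G\in \mf{F}$ the functor $\Def_{\rhobar|_G}$ is either represented by $R[[(G/\SL_n(R))^p]]$ or not representable over $\cat$.

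The remaining content is an analysis of $(G/\SL_n(R))^p$ via the Teichm\"uller splitting $R^\times = \mu_R\times R_1^\times$. The group $\mu_R$ is finite of order $\#\kk -1$, coprime to $p$, while $R_1^\times = 1+\mf{m}_R$ is pro-$p$ because every successive quotient $(1+\mf{m}_R^i)/(1+\mf{m}_R^{i+1}) \cong \mf{m}_R^i/\mf{m}_R^{i+1}$ is an $\kk$-vector space. The determinant identifies $G/\SL_n(R) \cong \det(G) \leq R^\times$, and for any closed subgroup $H\leq R^\times$ the maximal pro-$p$ quotient $H^p$ equals the image of $H$ under projection onto $R_1^\times$; in particular $H^p=1$ if and only if $H\leq \mu_R$.

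For part~(i), take $G=\GL_n(R)$, so that $\det(G)=R^\times$ and $(G/\SL_n(R))^p = R_1^\times$. The dichotomy above immediately yields either $R_u(\GL_n(R))\cong R[[R_1^\times]]$ or non-representability over $\cat$. Observing that $R[[R_1^\times]]\cong R$ in $\cat$ forces $R_1^\times$ to be trivial (for instance by comparing tangent spaces via Proposition~\ref{PropPresentation}, or by a topological Nakayama argument on $R_1^\times$), this isomorphism holds exactly when $\mf{m}_R=0$, i.e.\ when $R=\kk$. For part~(ii), the characterization of $H^p=1$ above gives $(G/\SL_n(R))^p = 1$ if and only if $\det(G)\leq \mu_R$, i.e.\ $G\leq \mu L_n(R)$; combined with Proposition~\ref{PropH-G} this yields $\mf{G}=\{G\in \mf{F} \mid G \leq \mu L_n(R)\}$.

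The argument is essentially bookkeeping built on top of Proposition~\ref{PropH-G}; the only substantive point is matching the algebraic condition $G\leq \mu L_n(R)$ with the representability condition $(G/\SL_n(R))^p=1$ through the Teichm\"uller splitting $R^\times=\mu_R\times R_1^\times$, which is where the assumption on the residue characteristic being coprime to $\#\mu_R$ is used.
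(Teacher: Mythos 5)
Your argument is correct and follows essentially the same route as the paper: apply Proposition~\ref{PropH-G} with $H=\SL_n(R)$ (justified by Theorems~\ref{MainThm}, \ref{MainThmDim3}, \ref{MainThmDim2} and the natural inclusion as universal lift), then analyze $(G/\SL_n(R))^p$ via the splitting $R^\times\cong\mu_R\oplus R_1^\times$, using that $\mu_R$ is finite of order coprime to $p$ while $R_1^\times$ is pro-$p$. The only thing worth stating explicitly in part~(ii) is the same observation you already make in part~(i), namely that a nontrivial finitely generated pro-$p$ quotient $B$ gives $R[[B]]\not\cong R$; otherwise the proofs match.
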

\begin{proof}
$(i)$ By Theorems~\ref{MainThm}, \ref{MainThmDim3} and \ref{MainThmDim2} we have  $R_u(\SL_n(R)) = R$, so we may apply Proposition~\ref{PropH-G} with $G = \GL_n(R)$ and $H= \SL_n(R)$. Since $G / H \cong R^\times \cong \mu_R \ \oplus \ R^\times_1$, $\mu_R$ is of finite order coprime to $p$ and $ R^\times_1$ is a pro-$p$ group, we have that $(R^\times)^{p} \cong R^\times_1$. Hence, the first claim follows.

$(ii)$ A similar reasoning as in part $(i)$ shows that elements of $\mf{G}$ correspond (via the determinant map) with these closed subgroups of $R^\times \cong \mu_R \ \oplus \ R^\times_1$ that are topologically finitely generated and have a trivial pro-$p$ completion. Since $R^\times_1$ is a pro-p group and $\mu_R$ finite of order coprime to $p$, every closed subgroup of $\mu_R \ \oplus \ R^\times_1$ is a product $A\oplus B$ of closed subgroups $A\leq \mu_R$ and $B \leq R^\times_1$. Moreover, $(A\oplus B)^p \cong B$, so the elements of $\mf{G}$ correspond with subgroups of $\mu_R$. 
\end{proof}

\begin{rmk}
For $(n, \kk) \in \{  (2, \GF_2) , (2, \GF_3) , (3,\GF_2)  \}$ there exists a lift of $\GL_n(\kk)$ to $\mathbb{Z}_p$. Indeed, in case $\kk = \GF_2$ we have $\GL_n(\kk) = \SL_n(\kk)$, so we already know it; for $n=2$, $\kk = \GF_3$ see \cite{Rainone} (it is also not difficult to check it directly, knowing that $\SL_2(\GF_3)$ lifts to $\mathbb{Z}_3$). This fact and a reasoning as in Proposition~\ref{PropSpec3} show that $R$ does not represent $\Def_{\rhobar|_G}$ for any $G \in \mf{F}$. 

If $n=2$, $\kk = \GF_5$ then $R_u( \SL_n(R) ) \not \cong R$, but it is true that $R_u( \mu L_n(R) ) \cong R$. This can be shown slightly modifying the proof of Theorem~\ref{MainThmDim2}. Omitting the details, the idea is to consider the subgroup $\{ \M{a}{}{}{1} | \ a \in \mu_R \} \leq \mu L_n(R)$, instead of $\{ \M{a}{}{}{a^{-1}} | \ a \in \mu_R\} \leq \SL_n(R)$ and to choose $\alpha \in \kk^\times $ satisfying the condition $\alpha^2 \neq 1$, instead of $\alpha^4 \neq 1$. Assuming this result, Proposition~\ref{PropH-G} implies that Corollary~\ref{CorollaryGL} holds in the case $n=2$, $\kk = \GF_5$ as well.
\end{rmk}

\end{document}